\documentclass{amsart}
\usepackage{graphicx} 
\usepackage{amsmath,amsthm,mathrsfs,url,hyperref,stmaryrd}
\usepackage{biblatex}
\title{Zeta zeros of function fields heuristically equidistribute as genus goes to infinity}
\author{Julian Shah}
\newcommand{\Z}{\mathbb{Z}}
\newcommand{\F}{\mathbb{F}}
\newcommand{\Q}{\mathbb{Q}}
\newcommand{\C}{\mathbb{C}}
\newcommand{\M}{\mathcal{M}}
\newcommand{\J}{\mathcal{J}}
\newcommand{\Jac}{\mathcal{J}ac}
\newcommand{\Jacan}{\mathsf{Jac}}

\newcommand{\Spec}{\operatorname{Spec}}
\newcommand{\an}{\mathrm{an}}
\newcommand{\Frob}{\mathrm{Frob}}
\newcommand{\Tr}{\mathrm{Tr}}

\newtheorem{theorem}{Theorem}[section]
\newtheorem{corollary}[theorem]{Corollary}
\newtheorem{proposition}[theorem]{Proposition}
\newtheorem{lemma}[theorem]{Lemma}
\newtheorem{heuristic}[theorem]{Heuristic}
\newtheorem{theoremalpha}{Theorem}

\begin{document}

\maketitle

\vspace{-1cm}
\begin{abstract}
    We propose a heuristic for the growth rate of the average class number of curves in $\mathcal M_g(\mathbb F_q)$ as $g \to \infty$. We show that this heuristic implies that the zeros of the zeta functions of such curves become equidistributed on the circle as $g\to\infty$. Our heuristic is based on the assumption that the stable cohomology of the universal Jacobian dominates a point count using the trace formula, following the method of Achter et al.
\end{abstract}

\section{Introduction}

The study of the distribution of zeros of $L$-functions is of fundamental importance in number theory and arithmetic statistics. In this article we will focus on the case of the zeta functions attached to curves over finite fields, i.e. the zeta functions of function fields. For our purposes, all curves will be smooth and proper over a base; at various points we will have some cause to consider curves over a base that is not the spectrum of a field, though our main interest will be in curves of the above type over $\F_q$ which are geometrically connected. Such curves are in bijection with the $\F_q$-points of $\M_g$.

For such a curve $C$ of genus $g$, we can write its zeta function as
$$Z(C,t) = \frac{\prod_{i=1}^{2g}(1-\omega_it)}{(1-t)(1-qt)}$$
where the $\omega_i$ are complex numbers of absolute value $\sqrt q$\footnote{We will refer to the $\omega_i$ as the zeta zeros of $C$, despite the fact that it is the $\omega_i^{-1}$ that are the zeros of $Z(C,t)$; since we are mainly interested in the distribution of zeta zeros on the circle, we will allow ourselves this abuse of language.}. Considering that $Z(C,t)$ has only finitely many zeros, the problem of studying their distribution becomes most interesting when studying families of curves. Our goal is to show that a heuristic about the cohomology of moduli spaces yields a conjectural distribution for zeta zeros of curves ranging in all of $\M_g(\F_q)$ in the limit as $g\to\infty$. The method we apply, which is not new, is due to Achter, Erman, Kedlaya, Wood, and Zureick-Brown. We refer to their paper \cite{achter-wood}, which produces a conjectural distribution for the number of rational points on a random curve in $\M_g$, for an excellent exposition of the technique.

\subsection{Background} The foundational work in this direction was initiated by Katz and Sarnak in \cite{katz-sarnak}, in which they also consider the family $\M_g(\F_q)$ of all curves of genus $g$ over $\F_q$. They showed that in the limit as both $g$ and $q$ tend to infinity, the spacing of the $\omega_i$ on the circle tends toward the GUE measure, which is a measure from random matrix theory defined in terms of families of compact classical matrix groups. A key input in their work is Deligne's equidistribution theorem for the Frobenius operator acting on $H^1(C,\Q_\ell)$ whose eigenvalues are precisely the $\omega_i$. However, this approach has the drawback of requiring that the size $q$ of the base field go to infinity in order to obtain an equidistribution statement.

As such, if one wishes to understand how zeta zeros distribute for families of curves in which the genus may go to infinity but $q$ is to remain fixed, different methods are required. Much progress has been made for families of curves in which the gonality of the curves remains fixed as the genus grows, e.g. \cite{cyclic-p-fold}, \cite{hyperelliptic}, \cite{trigonal}; in these settings the method typically relies on being able to precisely describe and vary the geometry of covers of a given kind, and using such a description to make an explicit count.

If we want to understand the behavior of \textit{all} curves over $\F_q$, then such a method is no longer viable, as the stratum of curves of a given gonality in $\M_g$ will have positive codimension as $g\to\infty$. The increased difficulty in probing all of $\M_g$ can be attributed to $\M_g$ being far more geometrically complex than the moduli of curves of a fixed, low gonality; indeed, we currently have no analogous method to explicitly vary among all curves in $\M_g(\F_q)$. At present, such a sufficient understanding of $\M_g$ so as to make unconditional statements about various limiting numerics as $g\to\infty$ seems to be out of reach.

\subsection{Cohomological heuristics}\label{achter} A want for more geometric understanding of $\M_g$ was the impetus for the heuristic made in \cite{achter-wood} by Achter et al. We briefly review their idea here, and defer a more thorough motivation for the key heuristic to \textit{loc cit}.

The Behrend-Grothendieck-Lefschetz trace formula states that for a smooth, finite-type, equidimensional, Deligne-Mumford stack $\mathcal X$ over $\F_q$, the number of $\F_q$-rational points of $\mathcal X$ can be computed as
$$\#\mathcal X(\F_q) = \sum_{i=0}^{2\dim \mathcal X}(-1)^i\Tr(\Frob_q,H^i_c(\mathcal X_{\overline\F_q},\Q_\ell)),$$
where $H^i_c(\mathcal X_{\overline F_q},\Q_\ell)$ denotes compactly supported \'etale cohomology. Here (and for the rest of this article) it is implicit that the $\F_q$-points of such a DM stack $\mathcal X$ are to be counted with weights reciprocal to the size of their automorphism groups, as is understood to be the most natural way of point counting for stacks.

Achter et al. treat the case of $\mathcal X = \M_{g,n}$, the moduli of genus $g$ curves with $n$ marked points. The \'etale cohomology groups of these spaces are known in low (or high, when considering compactly supported cohomology via Poincar\'e duality) degree, and in fact become independent of $g$ for $g$ large enough; in other degrees, there also exists a collection of stable classes, though there are also other classes making up the remainder of the cohomology groups. The idea then is to make a heuristic that only the groups in ``stable'' degrees contribute meaningfully to the trace formula, with enough cancellation occuring in the non-stable classes in remaining degrees to overcome the possibility (which is known to occur for the case of $\M_g$) that there may be a very large number of non-stable classes. This heuristic may be stated precisely as follows.

\begin{heuristic}[\cite{achter-wood}, Heuristic 2]\label{mg hypothesis}
    Let $B^k(\M_g)$ denote the space of non-stable classes in $H^k_c(\M_{g,\overline\F_q},\Q_\ell)$. Then we have
    $$\lim_{g\to\infty} q^{-3g+3}\sum_{k=0}^{3g-3 - \frac{2g-3}3}(-1)^k\Tr(\Frob_q,B^k(\M_g)) = 0$$
\end{heuristic}

For a more precise description of $B^k(\M_g)$ and the stable classes of $\M_g$, and the motivation behind this heuristic, see \cite{achter-wood}; we will only need Heuristic \ref{mg hypothesis} insofar as we will make use of the computations made in \textit{loc cit.} under this assumption.

\subsection{Results} Our contribution is to carry out the analysis described in \S\ref{achter} for the case of the \textit{universal Jacobian stack}, which is a stack $\Jac_g$ which parametrizes families of curves along with a point on the Jacobian of each curve in the family. There is a natural morphism $\Jac_g \to M_g$ which remembers only the underlying family of curves, and whose fibers over $\F_q$-points of $\M_g$ are the Jacobians of the corresponding curves. The idea then is that the number of $\F_q$-points on $\Jac_g$ is the sum
$$\sum_{C \in \M_g(\F_q)}h(C),$$
where $h(C)$ is the class number of $C$, i.e. the number of $\F_q$-rational points on the Jacobian of $C$. Similarly to $\M_g$, the cohomology of the stack $\Jac_g$ contains a family of stable classes, and its low degree singular cohomology over $\C$ was computed in \cite{ebert-randall-williams}. In analogy with \cite{achter-wood}, we make the following heuristic to deal with the remaining cohomology:
\begin{heuristic}\label{average-class-number}
    Let $B_g^k$ be the space of non-stable classes in $H^k_c(\Jac_g,\Q_\ell)$. Then we have
    $$\lim_{g\to\infty} q^{-4g+3}\sum_{k=0}^{4g-3 - \frac{2g-3}3}(-1)^k\Tr(\Frob_q,B^k_g) = 0$$
\end{heuristic}
Assuming this heuristic, we prove that $\#\Jac_g(\F_q)$ grows asymptotically in $g$ as $C(q)q^{4g-3}$ for an explicit constant $C(q)$. If we also assume Heuristic \ref{mg hypothesis} and use the resulting asymptotic for $\#\M_g(\F_q)$, then the ratio $\#\Jac_g(\F_q)/\#\M_g(\F_q)$ computes the mean class number among $\F_q$-curves of genus $g$, and we find:

\begin{theoremalpha}\label{intro class number theorem}
    Let $h_g$ denote the mean class number of $\F_q$-curves of genus $g$. Under the above two heuristics, we have
    $$h_g \sim c(q)q^g$$
    as $g\to\infty$, for an explicit constant $c(q)$.
\end{theoremalpha}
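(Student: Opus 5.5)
The plan is to compute $\#\Jac_g(\F_q)$ asymptotically with the trace formula and divide by the asymptotic for $\#\M_g(\F_q)$ that \cite{achter-wood} derive from Heuristic \ref{mg hypothesis}, namely $\#\M_g(\F_q)\sim c_{\M}(q)\,q^{3g-3}$ with $c_{\M}(q)=\prod_{i\ge1}(1-q^{-i})^{-1}\cdot(\text{correction from stable classes})$; the exact form is whatever \emph{loc.\ cit.}\ gives.

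\textbf{Step 1: stable cohomology and trace formula.} $\Jac_g$ is a smooth DM stack of dimension $4g-3$; over $\M_g$ it is an abelian scheme of relative dimension $g$. By the Behrend trace formula,
$$\#\Jac_g(\F_q)=\sum_{k}(-1)^k\Tr(\Frob_q,H^k_c(\Jac_{g,\overline\F_q},\Q_\ell)).$$
Write $H^k_c=S^k\oplus B^k_g$, where $S^k$ is Poincaré dual to the stable classes. I would describe the stable rational cohomology of $\Jac_g$ using \cite{ebert-randall-williams}, transported to $\ell$-adic cohomology by comparison and smooth base change. Over $\Q$, the stable cohomology is a free graded-commutative algebra. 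It is generated by the Mumford–Morita–Miller classes $\kappa_i$ (degree $2i$, weight $2i$, pulled back from $\M_g$) together with additional classes $\kappa_{i,j}$ coming from the universal line bundle. These extra classes are pushforwards of $c_1(\mathcal L)^{i+1}e^j$, in degree $2i+2j-2$ with $i+j\geq 1$, and they include odd-degree or extra even-degree generators as \cite{ebert-randall-williams} specify. I will take the Frobenius action on a degree-$2m$ tautological generator to be $q^m$, since the generators are Chern-class constructions. In the stable range $k\le\frac{2g-3}{3}$ (the range of Harer stability for $\M_g$, which also governs $\Jac_g$), Poincaré duality turns $H^{k}$ into $H^{2(4g-3)-k}_c$ with Frobenius eigenvalues multiplied by $q^{4g-3}$.

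\textbf{Step 2: the main term.} Dualizing, the stable contribution equals $q^{4g-3}$ times the truncation to degrees $\le\frac{2g-3}3$ of $\sum_k(-1)^k\Tr(\Frob_q^{-1},H^k_{\mathrm{stab}})$. For a free graded-commutative algebra this sum factors: each even generator of weight $2m$ contributes $(1-q^{-m})^{-1}$, and each odd generator contributes a factor $(1-q^{-w/2})$. As $g\to\infty$ the truncation converges to the full Euler product, provided the number of generators in each degree grows slowly enough for the product to converge absolutely for $q\ge2$; I expect this is so, since the number of degree-$d$ generators is linear in $d$. Call the limit $C(q)$. The non-stable terms, including the degrees beyond $4g-3-\frac{2g-3}3$ that Heuristic \ref{average-class-number} does not cover, must be shown to be $o(q^{4g-3})$. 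Following \cite{achter-wood}, I would use Deligne's weight bound: a class in $H^k_c$ has Frobenius eigenvalues of absolute value at most $q^{k/2}$. The dimension of $H^k_c$ for small $k$ must be controlled, for instance using the Leray spectral sequence over $\M_g$ with fibers $\wedge^\bullet H^1$ of the Jacobian, which is exactly what Heuristic \ref{average-class-number} is designed to absorb. Granting both heuristics, this gives $\#\Jac_g(\F_q)\sim C(q)q^{4g-3}$.

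\textbf{Step 3: conclude.} By the definition of $\Jac_g$, $\#\Jac_g(\F_q)=\sum_{C}h(C)/\#\mathrm{Aut}(C)$, with both counts weighted in the same way. Hence $h_g=\#\Jac_g(\F_q)/\#\M_g(\F_q)\sim (C(q)/c_{\M}(q))\,q^{g}$, so $c(q)=C(q)/c_{\M}(q)$. Since the $\kappa_i$ generators are common to both algebras, their factors cancel, and $c(q)$ is just the Euler product over the extra generators of the Jacobian.

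The main obstacle is Step 2. One must pin down the stable generators of $\Jac_g$ precisely, with their degrees, parities and Frobenius weights. One must also justify that the truncated alternating sums converge to a convergent product, handling the possibly nonzero contribution of the top degrees honestly rather than merely asserting it.
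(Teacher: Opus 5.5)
Your architecture is the paper's: trace formula on $\Jac_g$, stable classes from Ebert--Randall-Williams with Tate-type Frobenius, Heuristic~\ref{average-class-number} for the rest, then divide by the Achter et al.\ count. But, as you acknowledge, the proposal stops exactly where the content lies. The missing input is Theorem~\ref{stability} in precise form.

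\emph{The stable ring.} It is the \emph{polynomial} algebra $R$ on the classes $\nu_{ij}=\pi_!(c_1(\omega)^{i+1}c_1(\mathcal L)^j)$, with $i\ge -1$, $j\ge 0$, $i+j\ge 1$ and $(i,j)\ne(0,1)$. The class $\nu_{ij}$ has degree $2(i+j)$. A fibre integral lowers degree by $2$, so your $\pi_!(c_1(\mathcal L)^{i+1}e^j)$ also sits in degree $2i+2j$, not $2i+2j-2$. There are no odd generators, so every stable term enters the trace formula with sign $+$. Each $\nu_{ij}$ factors through the Chow ring, so $\Frob_q$ acts on it by $q^{i+j}$.

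\emph{The constant.} Counting pairs with $i+j=k$ gives $2$ generators in degree $2$ and $k+2$ in degree $2k$ for $k\ge 2$. Hence $HS_R(t)=(1-t^2)^{-2}\prod_{k\ge2}(1-t^{2k})^{-(k+2)}$, which converges for $|t|<1$. Evaluating at $t=q^{-1/2}$ gives both the convergence you only ``expect'' and $C(q)=HS_R(q^{-1/2})$. Achter et al.\ give $\#\M_g(\F_q)\sim q^{3g-3}\prod_{k\ge1}(1-q^{-k})^{-1}$ with no correction factor. Since $\nu_{k,0}=\pm\kappa_k$, your cancellation then produces the explicit $c(q)=(1-q^{-1})^{-1}\prod_{k\ge2}(1-q^{-k})^{-(k+1)}$. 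Without this step you have no explicit constant, which the statement requires.

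\emph{The non-stable range.} Your plan here would fail as stated. Deligne's bound gives eigenvalues of size at most $q^{k/2}$, which near $k=2d_g-\frac{2g-3}{3}$ (with $d_g=4g-3$) saves only a factor $q^{-(2g-3)/6}$ against $q^{d_g}$. Meanwhile the dimensions of the non-stable groups are completely uncontrolled. Leray over $\M_g$ just converts them into unstable cohomology of $\M_g$ with coefficients $\wedge^q H^1$, which is equally unknown. That is precisely what the heuristic is for. The computation in \S4 applies Heuristic~\ref{average-class-number} to the whole range $0\le k\le 2d_g-\frac{2g-3}{3}$. So the upper limit $4g-3-\frac{2g-3}{3}$ in its displayed statement is evidently meant as $2(4g-3)-\frac{2g-3}{3}$. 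You were right to notice the literal mismatch, but the fix is to read the heuristic that way, not to look for an unconditional bound.

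You also need to split each non-stable $H^k_c$ into two parts. The part $A_g^k$ coming from $R^{2d_g-k}$ contributes at most $q^{d_g}\sum_{j>(2g-3)/3}q^{-j/2}\dim R^j=o(q^{d_g})$, since this is a tail of the convergent series above. Only the remainder $B^k_g$ is handed to the heuristic.

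\emph{The comparison step.} ``Comparison and smooth base change'' does not move Ebert--Randall-Williams' topological result to $\Jac_{g,\overline\F_q}$. First, one needs $\Jac_{g,\C}^{\an}\cong\Jacan_g$ (Theorem~\ref{analytification}). Second, nothing here is proper, so one needs a specialization isomorphism (Proposition~\ref{specialization prop}). It applies to the smooth proper map $Z_g=Y_g\times_{\M_g}\Jac_g\to Y_g$ over the complement of a relative normal crossings divisor, where tameness kills the vanishing cycles. The result is then descended along the finite $G$-cover by Hochschild--Serre.
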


The class number $h(C)$ may also be interpreted as $q$ times the residue of $Z(C,t)$ at $t=1$, which in principle gives a modicum of information about the zeros of $Z(C,t)$. Using work of Tsfasman and Vladut, we are able to show that the growth rate predicted in Theorem \ref{intro class number theorem} is sufficiently \textit{slow} so as to actually be coercive for the distribution of zeta zeros. This allows us to prove, again conditionally on our two heuristics, our main result.
\begin{theoremalpha}\label{intro zeta zero theorem}
    Under the above two heuristics, the zeta zeros of curves in $\M_g(\F_q)$ become equidistributed on the circle as $g\to\infty$.
\end{theoremalpha}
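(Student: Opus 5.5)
The plan is to combine Theorem \ref{intro class number theorem} with an explicit-formula identity for $\log h(C)$, in the spirit of Tsfasman--Vladut, and then to use positivity of point counts to turn an upper bound on the average class number into equidistribution. For $C\in\M_g(\F_q)$, write $\omega_j=\sqrt q\,e^{i\theta_j}$, and set $a_m(C)=\frac1{2g}\sum_{j=1}^{2g}\cos(m\theta_j)$. Let $\mu_C$ be the normalized counting measure of the $\theta_j$ on the circle. Since the $\omega_j$ are closed under complex conjugation, the sine moments of $\mu_C$ vanish. By Weyl's criterion it therefore suffices to show that for each fixed $m\ge1$, $a_m(C)\to 0$ in probability, for $C$ drawn from $\M_g(\F_q)$ with the stacky weights. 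This gives the natural meaning of the statement: $\mu_C$ converges in probability to Haar measure, and hence so does the average $\mathbb E[\mu_C]$.

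\textbf{Step 1 (explicit formula).} The class number is $h(C)=P(1)=\prod_j(1-\omega_j)$. Expanding $\log(1-\omega_j^{-1})$ and taking real parts gives
\begin{equation*}
X_C:=\frac{\log h(C)}{g}-\log q=-2\sum_{m\ge1}\frac{q^{-m/2}}{m}\,a_m(C).
\end{equation*}
This series converges absolutely and uniformly in $C$, because $|a_m|\le 1$.

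\textbf{Step 2 (one-sided bounds from positivity).} We have $\sum_j\omega_j^m=q^m+1-\#C(\F_{q^m})\le q^m+1$, so $a_m(C)\le (q^m+1)q^{-m/2}/(2g)$ deterministically. Fix $M$. The terms with $m\le M$ each satisfy $-2\frac{q^{-m/2}}m a_m\ge -O_M(1/g)$. The tail $m>M$ contributes at most $2\sum_{m>M}q^{-m/2}/m=:\varepsilon_M$ in absolute value, and $\varepsilon_M\to0$ as $M\to\infty$. Hence $X_C\ge -\varepsilon_M-O_M(1/g)$ uniformly in $C$.

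\textbf{Step 3 (upper bound from Theorem \ref{intro class number theorem}).} Markov's inequality gives $\Pr(X_C>\varepsilon)=\Pr\bigl(h(C)/q^g>e^{\varepsilon g}\bigr)\le h_g q^{-g}e^{-\varepsilon g}$. By Theorem \ref{intro class number theorem} the right-hand side is $\sim c(q)e^{-\varepsilon g}\to0$. Combined with Step 2, this shows $X_C\to0$ in probability.

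\textbf{Step 4 (conclusion).} Fix $m_0$ and $\delta>0$, and choose $M\ge m_0$ with $\varepsilon_M$ small compared to $\delta q^{-m_0/2}/m_0$. On the event $a_{m_0}(C)<-\delta$, the term $m_0$ contributes at least $2\delta q^{-m_0/2}/m_0$ to $X_C$. The other terms with $m\le M$ contribute at least $-O_M(1/g)$, and the tail contributes at least $-\varepsilon_M$. So on this event $X_C$ is bounded below by a fixed positive constant for $g$ large, and by Step 3 the event has probability tending to $0$. Together with the deterministic bound $a_{m_0}\le O(1/g)$, this gives $a_{m_0}\to0$ in probability, and Weyl's criterion finishes the proof.

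The main obstacle is Step 3. It is the only place where the heuristics enter, through Theorem \ref{intro class number theorem}. What matters is precisely that $h_g$ grows like $q^g$ up to a constant, or even just $q^{g+o(g)}$: a growth rate of $q^{g}e^{\kappa g}$ with $\kappa>0$ would not force $X_C\to0$. Everything else is bookkeeping, with the only care needed in choosing the truncation $M$ uniformly in $C$.
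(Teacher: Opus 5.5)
Your proof is correct, and it reaches the result by a more direct route than the paper's. The paper works on the prime-divisor side. It splits $\M_g(\F_q)$ by class number into the bottom $1-\varepsilon$ fraction $S_{g,\varepsilon}$ and the top $\varepsilon$ fraction $T_{g,\varepsilon}$, and uses $h_g\sim c(q)q^g$ to get $\log h(C)\le g\log q+o(g)$ uniformly on $S_{g,\varepsilon}$; this is your Markov step in disguise. It then concatenates the curves of $S_{1,\varepsilon},S_{2,\varepsilon},\dots$ into a single sequence. Using Tsfasman's truncated explicit formula (a cutoff $N(g)\to\infty$, the identity $N_j=\sum_{m\mid j}mB_m$, and a separate tail estimate), it shows this sequence is asymptotically exact with every $\phi_m=0$. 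Equidistribution is then imported from Tsfasman--Vladut's Corollary B', and the $T_{g,\varepsilon}$ part is discarded as having mass $O(\varepsilon)$.

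You stay on the spectral side instead. Your series $X_C=-2\sum_m q^{-m/2}a_m(C)/m$ converges absolutely and uniformly in $C$, so no $g$-dependent truncation is needed. Positivity enters as $\#C(\F_{q^m})\ge0$, i.e.\ $a_m\le O_m(1/g)$, rather than as $B_m\ge0$, and Markov's inequality replaces the $S/T$ bookkeeping. Because $2gq^{m/2}a_m=q^m+1-\#C(\F_{q^m})$, proving $a_m\to0$ in probability and invoking Weyl's criterion does by hand what Corollary B' supplies when all $\phi_m$ vanish.

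Your version buys a short, self-contained argument. It makes transparent that only $h_g\le q^{g+o(g)}$ is used, and it states the typical-curve conclusion ($\mu_C\to$ Haar in probability). The paper's argument also yields this on $S_{g,\varepsilon}$ but does not state it. The paper's packaging buys the connection to the Tsfasman--Vladut framework: asymptotic exactness, the Brauer--Siegel-type Theorem D', and the interpretation that a random curve behaves like a bounded-gonality family. That connection is the conceptual point of the section, and it would also give the explicit limiting density if some $\phi_m$ were nonzero.
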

See Theorem \ref{zeta zero theorem} for a more precise statement. In Section 2, we recall the definitions of the universal Jacobian stack in the algebraic and analytic settings and prove that they are related by analytification, which is likely known to experts but to our knowledge has not been explicitly recorded in the literature. We also describe the stable classes on $\Jac_g$ which we will need for our computation. In Section 3 we demonstrate a comparison isomorphism between the singular cohomology of $\Jac_g$ over $\C$ and the \'etale cohomology of $\Jac_g$ over $\overline\F_q$, allowing us to use Ebert and Randall-Williams' computation of the low degree singular cohomology for our application. In Section 4 we carry out our trace formula computation for $\#\Jac_g(\F_q)$ and prove Theorem \ref{intro class number theorem}. In Section 5 we discuss the application to zeta zeros and prove Theorem \ref{intro zeta zero theorem}.

\subsection{Remark on heuristics} That $\M_g$ and $\Jac_g$ exhibit stability in their low degree cohomology is an instance of a widespread phenomenon in the cohomology of ``naturally occuring'' families of spaces. There is a general philosophy in the theory of moduli spaces that the geometric properties of the objects parametrized by a family of moduli spaces should only depend on the ``stable cohomology'' of this family. Heuristics \ref{mg hypothesis} and \ref{average-class-number} are two instances of this philosophy.

However, this principle is not infallible; for instance, Lipnwoski and Tsimerman prove in \cite{lipnowski-tsimerman} that the number of $\F_p$-points of $\mathcal A_g$, the moduli of principally polarized abelian varieties, is much greater than the stable cohomology of the $\mathcal A_g$ predicts. As such, if one believes Heuristics \ref{mg hypothesis} and \ref{average-class-number}, then a possible interpretation of the situation is that there is something special about the image of the Torelli map $\M_{g,\F_p} \hookrightarrow \mathcal A_{g,\F_p}$ when compared with $\mathcal A_{g,\F_p}$ as a whole.

\subsection{Acknowledgements} This project came about due to a number of suggestions by Peter Sarnak, and the author is very grateful to him for his advisorship and his insights along the way. The author would also like to thank Will Sawin for many helpful comments, suggestions, and corrections.

\section{Universal Jacobians and stable cohomology}

The universal Jacobian stack can be formulated both over the ``analytic'' category $\mathbf{An}$ of complex analytic spaces, and over the ``algebraic'' category $\mathbf{Sch}$ of schemes. We need and will define both of these notions, and part of our job will be to prove that the former holomorphic stack is the analytification of the latter algebraic stack.

\subsection{Definitions}
Analytically, we define a stack $\Jacan_g$ whose fiber over an analytic space $X$ is the groupoid of families $E\to X$ of genus $g$ Riemann surfaces over $X$, along with a holomorphic section $s:X \to \mathrm{Pic}(E/X)$ of the degree 0 Picard bundle of the family; isomorphisms are isomorphisms of families over $X$ respecting sections.

Algebraically, the definition is slightly more involved. First we define an auxiliary stack $\J_g$ whose fiber over a scheme $S$ is the groupoid of smooth relative curves $C \to S$ of genus $g$, equipped with a line bundle $L$ on $C$; isomorphisms $(C,L) \to (C',L')$ are $S$-isomorphisms $f:C\to C'$ along with an isomorphism $f^*L' \to L$.

The stack $\J_g$ is too large to be our universal Jacobian, as each object of $\J_g$ contains a copy of $\mathbb G_m$ in its automorphism group owing to the scalar multiplication action of $\mathbb G_m$ on the line bundle $L$. In particular, $\J_g$ is not Deligne-Mumford. To remedy this, we define $\Jac_g$ to be the rigidification $\J_g\!\! \fatslash \mathbb G_m$ (here we reverse the notation of \cite{melo-viviani}, as for us the rigidified stack will be the fundamental object).

Both $\Jacan_g$ and $\Jac_g$ are Deligne-Mumford stacks over their respective categories. $\Jac_g$ has a smooth, proper, and representable morphism to $\M_g$ which forgets the line bundle, and the fiber of $\Jac_g$ over a $k$-point for a field $k$ is the Jacobian of the curve of the corresponding $k$-point of $\M_g$. For a more thorough review of the analytic and algebraic universal Jacobian stacks, see \cite{ebert-randall-williams} and \cite{melo-viviani} respectively.

\subsection{Analytification}
Ultimately, we wish to make a computation regarding Jacobians of curves over $\F_q$, meaning that we will need to understand the \'etale cohomology of $\Jac_g$ over $\F_q$, rather than the singular cohomology of the analytic stack $\Jacan_g$. Showing that Ebert and Randall-Williams' stable ring also gives the low degree \'etale cohomology of $\Jac_{g,\F_q}$ will be the purpose of the next section; before doing so, it is necessary that we relate $\Jacan_g$ and $\Jac_g$ by showing that the former is the analytification of the complex points of the latter.

\begin{lemma}\label{artinian}
    Let $f:\mathcal X \to \mathcal Y$ be a 1-morphism of Deligne-Mumford stacks over $\textbf{An}$. Suppose that for each local Artinian scheme $S$ which is finite-type over $\C$, the induced functor $f:\mathcal X(S^\an) \to \mathcal Y(S^\an)$ is an equivalence of categories. Then $f$ is an isomorphism.
\end{lemma}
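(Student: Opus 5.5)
The strategy is to reduce to the case where the target is a complex analytic space, and there to use the classical fact that a morphism of complex analytic spaces which is bijective on all Artinian points is an isomorphism.

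First we reduce to the representable case. Since $\mathcal Y$ is Deligne--Mumford, choose an étale atlas $U \to \mathcal Y$ with $U$ an analytic space, and form the base change $\mathcal X_U = \mathcal X \times_{\mathcal Y} U \to U$. Being an isomorphism is local on the target for the étale topology, so it suffices to show $\mathcal X_U \to U$ is an isomorphism. The hypothesis is stable under this base change. For an Artinian $S$, the groupoid $\mathcal X_U(S^\an)$ is the 2-fiber product $\mathcal X(S^\an)\times_{\mathcal Y(S^\an)} U(S^\an)$, and the base change of an equivalence of groupoids is again an equivalence. Hence $\mathcal X_U(S^\an)\to U(S^\an)$ is an equivalence. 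Because $U(S^\an)$ is a set (a discrete groupoid), the objects of $\mathcal X_U(S^\an)$ have no nontrivial automorphisms. Taking $S = \Spec \C[\epsilon]/(\epsilon^n)$ and $S=\Spec\C$, we conclude that the automorphism groups of $\mathcal X_U$ are trivial at points, including infinitesimally. For a Deligne--Mumford analytic stack, trivial stabilizers imply that $\mathcal X_U$ is an analytic space, via the analytic analogue of the criterion that a DM stack with trivial inertia is an algebraic space. I expect this step, justifying representability in the analytic DM setting, to be the main obstacle. What I would try first is to observe that the inertia $I_{\mathcal X_U}\to\mathcal X_U$ is finite and unramified and has trivial fibers, so it is an isomorphism.

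Next, with $\mathcal X_U = X$ an analytic space, we have a holomorphic map $f: X\to U$ inducing bijections $X(S)\to U(S)$ for every local Artinian $\C$-scheme $S$. Taking $S=\Spec \C$ shows that $f$ is bijective on points. For each $x\in X$, the bijections on $\Spec \mathcal O/\mathfrak m^n$-points show that the induced maps $\hat{\mathcal O}_{U,f(x)} / \mathfrak m^n \to \hat{\mathcal O}_{X,x}/\mathfrak m^n$ are isomorphisms for all $n$. Concretely, a local homomorphism of complete local rings that induces bijections on all Artinian-valued points is an isomorphism, by a Yoneda argument applied to the finite-length quotients. So $\hat{\mathcal O}_{U,f(x)}\to \hat{\mathcal O}_{X,x}$ is an isomorphism, and since analytic local rings are Noetherian and faithfully flat into their completions, $\mathcal O_{U,f(x)}\to\mathcal O_{X,x}$ is an isomorphism. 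Thus $f$ is a local isomorphism of analytic spaces, in particular étale, and it is bijective on points.

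Finally, an étale bijective holomorphic map is a homeomorphism: it is a local homeomorphism, and a bijective local homeomorphism is open and hence a homeomorphism. Together with the isomorphism on structure sheaves, this makes $f$ an isomorphism. Therefore $\mathcal X_U\to U$ is an isomorphism, and by descent so is $f:\mathcal X\to\mathcal Y$.
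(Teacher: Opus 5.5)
The paper gives no argument of its own for this lemma; it only cites \cite[Lemma 7.1]{hall}. Your proof is therefore an independent route. Its second half, the case of a morphism of analytic spaces, is essentially sound. There is, however, a genuine gap at exactly the step you flagged: passing from ``objects of $\mathcal X_U(S^\an)$ have no nontrivial automorphisms'' to ``$\mathcal X_U$ is an analytic space''.

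Your suggested fix at best shows that the inertia of $\mathcal X_U$ is trivial. For that you do not need finiteness of inertia, which the DM condition does not give without separatedness. It suffices to apply your space-case argument to the pullback of the inertia along an \'etale atlas $V\to\mathcal X_U$, which is a morphism of analytic spaces bijective on Artinian points.

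The real problem is that ``trivial inertia implies analytic space'' is not a valid principle in the analytic category. Algebraically it is essentially the definition of an algebraic space. An analytic space, however, is a locally ringed space, and quotients by \'etale equivalence relations need not be analytic spaces. For example, let $\Z$ act freely on $\C^*$ by $z\mapsto e^{2\pi i n\theta}z$ with $\theta$ irrational. Then $[\C^*/\Z]$ has an \'etale atlas, representable diagonal and trivial inertia. It is nevertheless not an analytic space: the atlas would have to be a local isomorphism, but every neighbourhood of $1$ contains points $e^{2\pi i n\theta}$ with $n\neq 0$ that are identified with $1$. So the argument must genuinely use the morphism to $U$.

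With the tools you already have, the gap can be closed as follows.
\begin{enumerate}
\item \emph{Monomorphism.} Let $T$ be an analytic space and let $a,b\in\mathcal X_U(T)$ lie over the same map $T\to U$. The sheaf $\mathrm{Isom}_T(a,b)$ is an analytic space over $T$, because the diagonal is representable. By full faithfulness on Artinian points, $\mathrm{Isom}_T(a,b)\to T$ is bijective on Artinian points, hence an isomorphism by your space case. Thus $\mathcal X_U\to U$ is a monomorphism.
\item \emph{Identifying $\mathcal X_U$ with $U$.} Let $V\to\mathcal X_U$ be an \'etale atlas. Lifting along $V\to\mathcal X_U$ over Artinian bases is unique, so $V\to U$ is bijective on pointed Artinian points. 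By your completion argument it is then a local isomorphism. Hence $\mathcal X_U$ is the image sheaf of $V$, i.e.\ the open subspace of $U$ swept out by $V$. Surjectivity on $\C$-points shows this is all of $U$.
\end{enumerate}
Equivalently, you can apply the space case to base changes of $\Delta_f$ to see that $f$ is a monomorphism, and then argue as in the second step.

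There is also a smaller correction in your space case. Noetherianity and faithful flatness of completion only show that $\phi\colon\mathcal O_{U,f(x)}\to\mathcal O_{X,x}$ is injective and flat. The map $\C[t]_{(t)}\to\C\{t\}$ is an isomorphism on completions without being surjective. Surjectivity needs an analytic input:
\begin{enumerate}
\item since $\hat\phi$ is an isomorphism, $\mathfrak m_{f(x)}\mathcal O_{X,x}=\mathfrak m_x$, so $\phi$ is quasi-finite;
\item by Weierstrass preparation, $\phi$ is then finite;
\item by Nakayama, $\phi$ is surjective.
\end{enumerate}
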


\begin{proof}
    See \cite[Lemma 7.1]{hall}.
\end{proof}

\begin{theorem}\label{analytification}
    For $g\geq2$, the natural morphism $\Jac_{g,\C}^\an \to \Jacan_g$ is an isomorphism.
\end{theorem}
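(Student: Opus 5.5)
The plan is to apply Lemma \ref{artinian} to the natural 1-morphism $f:\Jac_{g,\C}^\an \to \Jacan_g$. Both source and target are Deligne--Mumford stacks over $\mathbf{An}$: the target by its definition, the source because analytification preserves the Deligne--Mumford property. It therefore suffices to show that for every local Artinian $\C$-scheme $S$ of finite type, the functor $\Jac_{g,\C}^\an(S^\an)\to\Jacan_g(S^\an)$ is an equivalence. The first step is to identify the left side algebraically. For an algebraic stack $\mathcal X$ locally of finite type over $\C$ and such an $S$, we have $\mathcal X^\an(S^\an)\simeq\mathcal X(S)$. This is the defining universal property of analytification for these test objects, since $S^\an$ is a one-point space with structure sheaf $\mathcal O_S$, and a map $S^\an\to\mathcal X^\an$ factors through an étale atlas in the same way algebraically and analytically. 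So we must compare $\Jac_g(S)$ with $\Jacan_g(S^\an)$.

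Next, the curve data. Since $S$ is Artinian, GAGA over the proper base $S$ (Grothendieck's existence/GAGA for proper schemes over an Artinian base) shows that analytification gives an equivalence between smooth proper genus $g$ curves $C\to S$ and families of genus $g$ Riemann surfaces $E\to S^\an$. Essential surjectivity follows because a compact analytic family over an Artinian point with one-dimensional fibres is projective: take a relative pluricanonical embedding, using $g\ge2$. Full faithfulness follows from GAGA for morphisms. This reproduces the known identification $\M_{g,\C}^\an\cong$ the analytic moduli stack of curves, and it lets us fix $C$ and compare the fibres over it.

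Then the line bundle data, which I expect to be the main obstacle because of the rigidification. An object of $\Jac_g(S)$ over $C$ is a section of $\mathrm{Pic}^0_{C/S}$, which is only fppf-locally represented by a line bundle on $C$. Over a local Artinian $S$ with algebraically closed residue field, however, $C\to S$ has a section, since smooth morphisms admit sections over strictly henselian bases. Hence $\mathrm{Pic}_{C/S}(S)=\mathrm{Pic}(C)/\mathrm{Pic}(S)=\mathrm{Pic}(C)$, as $\mathrm{Pic}(S)=0$. Moreover the rigidified groupoid over a fixed $C$ is discrete, equal to the set $\mathrm{Pic}^0(C)$. This is because, after rigidification, automorphisms of $(C,L)$ lying over $\mathrm{id}_C$ are exactly $\mathcal O(S)^\times=\mathbb G_m(S)$, which the rigidification kills. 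On the analytic side, a holomorphic section $S^\an\to\mathrm{Pic}(E/S^\an)$ of degree 0 similarly corresponds to an element of $\mathrm{Pic}^0(E)$, and GAGA gives $\mathrm{Pic}(C)\cong\mathrm{Pic}(C^\an)$ compatibly with degree.

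Finally, I will check that these bijections are compatible with pullback along automorphisms $f:C\to C'$. They are, because analytification is functorial. So the functor is fully faithful and essentially surjective on each $S$, and Lemma \ref{artinian} concludes the proof. The care needed is in making sure that the definition of $\Jacan_g$ via the Picard bundle matches the rigidified $\mathbb G_m$-gerbe quotient on Artinian points. If it does not match directly, I would instead compare both to $\mathrm{Pic}^0_{\mathcal C/\M_g}$ over the universal curve, using that $\Jac_g$ is representable over $\M_g$ by the relative Picard scheme.
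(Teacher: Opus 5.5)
Your proposal is correct and follows the paper's route. You reduce via Lemma \ref{artinian} to local Artinian test schemes, identify $\Jac_{g,\C}^\an(S^\an)$ with $\Jac_{g,\C}(S)$, algebraize the curve and its morphisms by GAGA (using $g\ge 2$), and match line bundle classes by GAGA. The differences are minor: you prove directly in the Artinian case what the paper cites from Hall (Theorems C and A), and you justify explicitly why rigidified $S$-points amount to isomorphism classes of line bundles (sections exist over a strictly henselian Artinian base), a step the paper merely asserts.
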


That this map is an isomorphism has been suggested by various authors (see \cite[\S4.5]{ebert-randall-williams}, \cite[\S1.1]{melo-viviani}), and is likely known to experts.

\begin{proof}
    We will show that $\Jac_{g,\C}^\an \to \Jacan_g$ satisfies the assumptions of Lemma \ref{artinian}. Let $S$ be a local Artinian finite-type $\C$-scheme. Note that $S$ is proper over $\C$; leveraging this, the idea is to show we have an equivalence of $S^\an$-points by using various GAGA theorems.

    By \cite[Theorem C]{hall}, analytification induces an equivalence of categories
    $$\Jac_{g,\C}(S) \to \Jac_{g,\C}^\an(S^\an).$$
    The rigidification $\Jac_{g,\C}$ has $S$-points in bijection with pairs of smooth genus $g$ $S$-curves along with an isomorphism class of line bundle, with the obvious isomorphisms between such pairs. Thus, the functor
    $$\Jac_{g,\C}(S) \to \Jac_{g,\C}^\an(S^\an) \to \Jacan_g(S^\an)$$
    can be described by sending $(C/S,L)$ to the analytification $(C^\an/S^\an,L^\an)$. By \cite[Theorem A]{hall}, any genus $g$ $S^\an$-curve is the analytification of an $S$-curve (note that any such $S^\an$-curve has finite automorphisms since $g\geq2$), and any morphism of $S^\an$-curves is algebraic by Serre's GAGA (any $S^\an$-curve is proper over $\C$). Moreover, any line bundle on an $S^\an$-curve is isomorphic to an algebraic one. In total, the above functor is fully faithful and essentially surjective, and hence is an equivalence.
\end{proof}

\subsection{Stable cohomology}
Here we review Ebert and Randall-Williams' computation of the stable cohomology of $\Jacan_g$. To state their theorem, we define the graded algebra
$$R = \Q[\nu_{ij} \mid i\geq-1, \,\, j\geq0, \,\, (i,j) \neq (-1,0),(0,1)],$$
where we let the generator $\nu_{ij}$ have degree $2i+2j$. We will call this algebra the \textit{tautological ring}, and the $\nu_{ij}$ are called \textit{tautological classes}.

\begin{theorem}[\cite{ebert-randall-williams}, Theorem F]\label{stability}
    Let $g \geq 6$. For all $i\geq -1$, $j\geq0$, there exist classes $\nu_{ij} \in H^{2i+2j}(\Jacan_g,\Q)$ so that the induced algebra map
    $$R \to H^\bullet(\Jacan_g,\Q)$$
    is an isomorphism in degrees $\bullet \leq \frac{2g-3}3$.
\end{theorem}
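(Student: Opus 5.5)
Theorem \ref{stability} is cited from \cite{ebert-randall-williams}, so the plan is to indicate how their argument goes rather than to reprove it from scratch. We realize $\Jacan_g$ homotopically as a Borel construction. Let $\Sigma_g$ be a fixed surface of genus $g$ and $\Gamma_g$ its mapping class group. For $g\ge 2$ the analytic moduli stack is rationally $B\Gamma_g$. The universal Picard bundle of degree $0$ is a torus bundle with fiber $H^1(\Sigma_g;\mathbb R)/H^1(\Sigma_g;\Z)\simeq K(H_1(\Sigma_g;\Z),1)$. Hence $\Jacan_g$ has the homotopy type of $B(H\rtimes\Gamma_g)$ with $H=H_1(\Sigma_g;\Z)$. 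Equivalently, it is the classifying space of surface bundles $E\to X$ equipped with a class in $H^2$ of the total space restricting fiberwise to degree zero (up to the degree shift making the $\nu_{-1,j}$ appear). The classes $\nu_{ij}$ are then defined by fiber integration. Let $\pi:E\to \Jacan_g$ be the universal curve, $e\in H^2(E)$ the vertical Euler class, and $\lambda\in H^2(E)$ the first Chern class of the universal (rigidified) line bundle. We set $\nu_{ij}=\pi_!(e^{i+1}\lambda^j)$, which has degree $2(i+1)+2j-2=2i+2j$. The excluded indices are exactly those giving constants or trivially zero classes: $\nu_{-1,0}=\pi_!(1)=0$, and $\nu_{0,1}=\pi_!(\lambda)=\deg=0$.

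For stability we follow the Madsen--Weiss/Galatius paradigm with twisted coefficients. The first step is to identify the stable homology of moduli of surfaces with maps to $K(\Z,2)=\mathbb{CP}^\infty$, via the scanning map to the infinite loop space $\Omega^\infty MTSO(2)\wedge \mathbb{CP}^\infty_+$ (Cohen--Madsen). Its rational cohomology in each component is the free graded-commutative algebra on the classes $\pi_!(e^{i+1}c^j)$. The second step passes from "surfaces with maps to $\mathbb{CP}^\infty$" to "surfaces with a degree-zero line bundle." This uses the fibration whose fiber over a point of $\M_g$ is $\mathrm{Map}(\Sigma_g,\mathbb{CP}^\infty)_0\simeq \mathbb{CP}^\infty\times K(H,1)$. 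The $\mathbb{CP}^\infty$ factor records exactly the $\mathbb G_m$-gerbe that rigidification removes, which we handle by a Leray--Serre/Gysin argument. Finally, the range $\bullet\le \frac{2g-3}{3}$ comes from the Harer/Boldsen--Randal-Williams homological stability range for $\Gamma_g$ with polynomial twisted coefficients, namely $H^{\otimes k}$ and their exterior powers appearing in $H^*(K(H,1))=\Lambda^* H^\vee$. We would feed these into the Leray--Serre spectral sequence of $B(H\rtimes\Gamma_g)\to B\Gamma_g$.

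The main obstacle is this last step. One must compute the stable twisted cohomology $H^*(\Gamma_g;\Lambda^k H)$, show it is generated as a module by contractions expressible through the $\nu_{ij}$ (in Looijenga/Kawazumi style), and show that the spectral sequence degenerates with no hidden relations in the stable range. We would first try to do this by comparing with the Cohen--Madsen computation for maps to $\mathbb{CP}^\infty$, where freeness is already known. We would then track the loss of one generator, coming from the $\mathbb{CP}^\infty$ factor, through the Gysin sequence. For the purposes of this paper we simply invoke \cite[Theorem F]{ebert-randall-williams}, combined with Theorem \ref{analytification} when transporting the result to the algebraic stack.
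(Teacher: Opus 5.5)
Your final step, invoking \cite[Theorem F]{ebert-randall-williams}, is exactly the paper's proof: the paper gives no argument beyond the citation. The outline of your sketch is also the right one: Cohen--Madsen for surfaces with maps to $\mathbb{CP}^\infty$, followed by removing a $\mathbb{CP}^\infty$ factor. But the sketch goes wrong precisely where rigidification enters.

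With your own definition $\nu_{ij}=\pi_!(e^{i+1}\lambda^j)$, the class $\nu_{0,1}$ is $\pi_!(e\lambda)$, which has degree $2$. The constant $\pi_!(\lambda)=\deg=0$ is $\nu_{-1,1}$, not $\nu_{0,1}$. So $(0,1)$ is not excluded for being a constant. Your stated rationale (exclude exactly the constants and trivially zero classes) would instead exclude $(-1,0)$, $(-1,1)$ and $(0,0)$, and would keep $(0,1)$. The last two are the degree-$0$ constants $0$ and $2-2g$.

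Read literally, the statement keeps $\nu_{-1,1}$ and $\nu_{0,0}$ as free degree-$0$ generators. That is impossible since $H^0=\Q$, so it must be read with $i+j>0$, which is what the Hilbert series in the proof of Theorem \ref{main class number theorem} implicitly does. Your last paragraph is also left incoherent. You say one generator is lost through the $\mathbb{CP}^\infty$ factor, but you never say which one, having already set $\nu_{0,1}$ aside as a constant.

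The missing point is the following.
\begin{itemize}
\item Over $\Jacan_g$ there is no universal line bundle on the universal curve. The class $\lambda$ lives on the universal curve over the non-rigidified $\J_g^{\an}$. This is what your ``surface bundles with a class in $H^2$ of the total space'' actually describes, i.e.\ the Cohen--Madsen side, and it is not equivalent to $\Jacan_g$.
\item The map $\J_g^{\an}\to\Jacan_g$ is a $\mathbb{CP}^\infty$-fibration. Over $(C,L)$ the universal curve restricts to $C\times\mathbb{CP}^\infty$, with $\lambda\mapsto c_1(L)+x$. Hence $\nu_{0,1}$ restricts to $\pi_!(e)\,x=(2-2g)x\neq 0$.
\item Leray--Hirsch then gives $H^*(\J_g^{\an};\Q)\cong H^*(\Jacan_g;\Q)\otimes\Q[\nu_{0,1}]$.
\item The class $\lambda'=\lambda-\frac{1}{2-2g}\pi^*\nu_{0,1}$ descends to a rational class on the universal curve over $\Jacan_g$, and it satisfies $\pi_!(e\lambda')=0$.
\item Substituting $\lambda=\lambda'+\frac{1}{2-2g}\nu_{0,1}$ rewrites the Cohen--Madsen ring $\Q[\nu_{ij}: i+j>0]$ as $\Q[\nu'_{ij}: i+j>0,\,(i,j)\neq(0,1)]\otimes\Q[\nu_{0,1}]$. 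This is a triangular change of variables.
\item Comparing with the Leray--Hirsch decomposition yields the theorem in the stable range.
\end{itemize}

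So the lost generator is $\nu_{0,1}$ itself, and its exclusion is a normalization of $\lambda$, which uses $g\neq 1$. No stable computation of $H^*(\Gamma_g;\Lambda^kH)$, and no degeneration argument for $B(H\rtimes\Gamma_g)\to B\Gamma_g$, is needed. The step you flag as the main obstacle is not actually on the critical path.
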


Let us give a brief description of the construction of these tautological classes $\nu_{ij}$, which will be necessary for our application. In view of Theorem \ref{analytification}, we will give a slightly different presentation than in \cite{ebert-randall-williams}, opting to construct the $\nu_{ij}$ algebraically and factor through the analytification to obtain classes on $\Jacan_g$, rather than the direct topological approach taken in \textit{loc cit}.

We start by considering the universal curve $\mathcal C \to \J_g$, which is pulled back from the universal curve over $\M_g$. Here there are two relevant sheaves on $\mathcal C$: the relative dualizing sheaf $\omega_{\mathcal C/\J_g}$, and the universal line bundle $\mathcal L$. Letting $\pi:\J_g^\an \to \Jacan_g$ be the rigidification map, we can then define our tautological classes as
$$\nu_{ij} = \pi_!(c_1(\omega_{\mathcal C/\J_g})^{i+1}c_1(\mathcal L)^j) \in H^{2i+2j}(\Jacan_g,\Q),$$
where in the above expression $\omega_{\mathcal C/\J_g}$ and $\mathcal L$ are to denote the analytified sheaves on $\J_g^\an$. The key feature of this construction for our purposes is that the classes $\nu_{ij}$ factor through the Chow ring of $\Jac_g$, which will be important when we later describe the Frobenius action on the $\nu_{ij}$ in characteristic $p$.

\section{A comparison isomorphism for cohomology}

We now want to demonstrate an isomorphism between the singular cohomology of $\Jacan_g$, which we understand in low degrees, and the \'etale cohomology of $\Jac_g$ over $\overline\F_q$. In light of Theorem \ref{analytification}, there is an essentially standard comparison isomorphism
$$H^i(\Jacan_g,\Q_\ell) \cong H^i(\Jac_{g,\C},\Q_\ell),$$
so the crux of the matter is comparing the cohomology groups of $\Jac_{g,\C}$ and $\Jac_{g,\overline\F_q}$.

In \cite{achter-wood}, such a comparison between characteristic 0 and characteristic $p$ cohomology is proven for $\M_g$. The key input is that the Deligne-Mumford compactification $\overline\M_g$ admits a cover by a scheme which is smooth and proper over $\Z_p$, and whose fiber above $\M_g$ is the complement of a normal crossings divisor. We will leverage this cover of $\overline\M_g$ to prove our desired result for $\Jac_g$.

\begin{lemma}[\cite{achter-wood}, Lemma 9]\label{mg-cover}
There exists a smooth, proper scheme $\overline Y_g$ over $\Z_p$, a normal crossings divisor $D$ on $\overline Y_g$, and a finite group $G$ acting on $\overline Y_g$ so that $G$ preserves $D$, and so that $[Y_g/G] = \M_{g,\Z_p}$, where $Y_g = \overline Y_g \setminus D$.
\end{lemma}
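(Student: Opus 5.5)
The plan is to realize $\M_{g,\Z_p}$ as a quotient of a fine moduli scheme of curves with suitable level structure, and then invoke a smooth normal-crossings compactification of that level cover which is defined over $\Z_p$. Concretely, I would fix an auxiliary integer $n\geq 3$ prime to $p$ and let $Y_g$ be the moduli of genus $g$ curves with a (possibly non-abelian, Looijenga/Pikaart--de Jong type) level-$n$ structure. Here a level-$n$ structure means a trivialization of the étale cover attached to a characteristic finite quotient of $\pi_1$ of the curve. Such a moduli problem is finite étale over $\M_g[1/n]$ with Galois group a finite group $G$, namely the image of the mapping class group in the relevant finite quotient. For $n\geq3$ the rigidity of level structures (Serre's lemma) kills all automorphisms, so $Y_g$ is a smooth quasi-projective scheme over $\Z[1/n]\supset \Z_p$. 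The identification $[Y_g/G]\cong \M_{g,\Z_p}$ then follows formally: $Y_g\to \M_g$ is a $G$-torsor, since étale locally on the base any two level structures differ by a unique element of $G$.

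Next I would take $\overline Y_g$ to be the normalization of $\overline\M_{g,\Z_p}$ in the function field of $Y_g$. It is proper over $\Z_p$, carries a $G$-action extending that on $Y_g$, and satisfies $\overline Y_g\setminus D = Y_g$ for $D$ the preimage of the boundary $\partial\overline\M_g$. Because $D$ is the pullback of a $G$-stable divisor, $G$ preserves $D$ automatically. The remaining content is that $\overline Y_g$ is smooth over $\Z_p$ and that $D$ is a relative normal crossings divisor.

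I expect this smoothness to be the main obstacle. With ordinary abelian (symplectic) level-$n$ structure it genuinely fails: the normalization is singular along the boundary, owing to the local monodromy of non-separating and separating nodes. My approach is to use the Pikaart--de Jong (or Boggi--Pikaart) non-abelian level structures, chosen precisely so that the boundary behaves well. The check is étale local at a boundary point. Using the deformation theory of stable curves, $\overline\M_g$ is, étale locally, the quotient of a smooth space with coordinates $t_1,\dots,t_{3g-3}$, with boundary $t_1\cdots t_k=0$, by the finite automorphism group of the stable curve. The level cover is then a Kummer-type cover ramified along the $t_i$ with tame exponents prime to $p$, by Abhyankar's lemma, since $n$ is prime to $p$. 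Smoothness of $\overline Y_g$ reduces to two conditions: the local monodromy of the level cover around each node must be a single Dehn-twist power, so that the cover is locally $t_i=s_i^{m}$; and the residual automorphisms of the stable curve must act freely on the level structures. Both conditions are exactly what the non-abelian level structures guarantee. Once the cover is locally of the form $t_i=s_i^{m}$ with $p\nmid m$, it is smooth over $\Z_p$ with $D$ locally $s_1\cdots s_k=0$, which is the required normal crossings divisor, completing the construction.
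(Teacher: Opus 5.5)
Your sketch is essentially the argument behind this statement: the paper does not prove it and simply cites Achter et al., Lemma 9. The content of that lemma is the input you invoke, namely that normalizing $\overline\M_{g,\Z_p}$ in a suitable non-abelian (Pikaart--de Jong/Boggi--Pikaart) level cover of level prime to $p$ gives a smooth proper $\Z_p$-scheme with relative normal crossings boundary and a finite group action whose quotient over the interior is $\M_{g,\Z_p}$.

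Two small corrections. First, for $Y_g\to\M_{g,\Z_p}$ to be a $G$-torsor, take $G$ to be the full automorphism group of the level datum, allowing $Y_g$ to be disconnected, rather than the image of the mapping class group, which over $\Z_p$ misses the arithmetic part of the monodromy. Second, the local smoothness criterion is that the images of the Dehn twists at all nodes through a boundary point generate the direct product of the cyclic groups they generate. The node-by-node condition you state already holds for abelian level, where two homologous vanishing cycles produce the singularity $s^n=t_1t_2$.
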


To apply Lemma \ref{mg-cover} to the situation at hand, we prove a base change theorem for fibrations over a base with a suitable compactification.

\begin{proposition}\label{specialization prop}
    Let $A$ be a Henselian DVR of characteristic zero, and let $\bar\eta$ and $\bar s$ be geometric generic and special points of $A$, respectively. Let $X$ be a smooth, proper $A$-scheme, and let $D$ be a divisor on $X$ which is normal crossings over $A$.

    Let $U = X \setminus D$, and suppose that we have a smooth proper morphism $\pi:V \to U$. Moreover, suppose that a finite group $G$ acts compatibly on $V$ and $U$ by $A$-morphisms. Then for all primes $\ell$ distinct from the residue characteristic of $A$, there is a $G$-equivariant isomorphism
    \begin{gather}\label{specialization}
        H^i(V_{\bar\eta},\Q_\ell) \to H^i(V_{\bar s},\Q_\ell)
    \end{gather}
    given by the specialization map for cohomology.
\end{proposition}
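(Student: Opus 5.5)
The plan is to reduce to a statement about the Leray spectral sequence of $\pi$ together with tameness of the pushforward sheaves along $D$. Since $\pi:V\to U$ is smooth and proper, the higher direct images $R^j\pi_*\Q_\ell$ are lisse sheaves on $U$, and their formation commutes with arbitrary base change. In particular, they restrict to the corresponding direct images on $U_{\bar\eta}$ and $U_{\bar s}$. The Leray spectral sequences
$$E_2^{ij} = H^i(U_{\bar x}, R^j\pi_*\Q_\ell) \Rightarrow H^{i+j}(V_{\bar x},\Q_\ell), \qquad \bar x\in\{\bar\eta,\bar s\},$$
are compatible with the specialization maps, and also with the $G$-action because $G$ acts compatibly on $V$ and $U$. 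It therefore suffices to show that, for each lisse sheaf $\mathcal F = R^j\pi_*\Q_\ell$, the specialization map $H^i(U_{\bar\eta},\mathcal F)\to H^i(U_{\bar s},\mathcal F)$ is an isomorphism. A map of spectral sequences that is an isomorphism on $E_2$ is an isomorphism on abutments. Equivariance is then automatic: $G$ acts by $A$-morphisms, so it commutes with the cospecialization and specialization maps, all of which are defined functorially.

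For the key step I would invoke the relative purity and tame specialization results of SGA 7 / Deligne (as used in \cite{achter-wood} for the trivial coefficient case). Let $j:U\hookrightarrow X$ be the inclusion. If $\mathcal F$ is lisse on $U$ and tamely ramified along $D$ relative to $A$, then $Rj_*\mathcal F$ commutes with base change to the fibers. Moreover, $R\Gamma(X_{\bar s}, Rj_*\mathcal F)\cong R\Gamma(X_{\bar\eta},Rj_*\mathcal F)$ by proper base change for the proper $A$-scheme $X$, combined with the vanishing of vanishing cycles for $Rj_*\mathcal F$. The latter vanishing follows from smoothness of $X$ over $A$ and from $D$ being relative normal crossings with tame coefficients (Deligne, SGA $4\tfrac12$, Th.\ finitude, Appendix, or Illusie's treatment of tame log smooth pairs). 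This gives the isomorphism on $H^i(U_{\bar x},\mathcal F) = H^i(X_{\bar x},Rj_*\mathcal F)$.

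The main obstacle is verifying tameness of $R^j\pi_*\Q_\ell$ along $D$, especially along the special fiber. My first attempt would be to use Abhyankar-type arguments or the structure of monodromy. Tameness is only a condition at generic points of components of $D_{\bar s}$, where the residue characteristic is $p$. Since $\pi$ is smooth and proper, one route is to show that the local monodromy is quasi-unipotent with tame part of order prime to $p$. That would be automatic in the application, where $V=\Jac_g\times_{\M_g}Y_g$ has a degenerating abelian scheme with semistable reduction, so the monodromy is unipotent after a finite tame cover. In general, I would use that $A$ has characteristic zero: the generic fiber is then tame trivially. For the special fiber I would try to deduce tameness from tameness over the generic points of $D$, which lie in characteristic $0$ when $D$ is flat over $A$, using Zariski–Nagata purity and a specialization of the inertia. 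If this fails in full generality, I would add the tameness hypothesis explicitly, verified in the case of interest via semistable reduction of Jacobians.
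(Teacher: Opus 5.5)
There is a genuine gap, and it is the step you yourself flag as the main obstacle. You never show that the lisse sheaves $R^j\pi_*\Q_\ell$ are tamely ramified along $D$. Your fallbacks (adding tameness as a hypothesis, or checking it in the application via semistable reduction of Jacobians) prove a weaker statement than the one claimed.

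The missing observation is that tameness is automatic, and this is exactly what the hypothesis that $A$ has characteristic zero is for. Since $D$ is a relative normal crossings divisor, its components are smooth, hence flat, over $A$. So every generic point of $D$ lies over the generic point of $\Spec A$, and its residue field has characteristic $0$, where all ramification is tame. Tameness along $D$ is imposed at these generic points, so it holds vacuously. This is the one-line justification the paper gives before invoking SGA~7, Exp.~XIII, Lemme~2.1.11 to kill the vanishing cycles. Your framing, that tameness is a condition at the generic points of $D_{\bar s}$ where the residue characteristic is $p$, is where the argument went astray.

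If you want tameness at points of $D_s$ explicitly, your instinct to use purity does close the gap. Let $z\in D_s$, let $\mathcal O$ be the strict henselization of $X$ at $z$ with $D=\{x_1\cdots x_r=0\}$, and let $\varpi$ be a uniformizer of $A$. Let $W$ be a connected finite \'etale cover of $U_{\mathcal O}=\Spec\mathcal O[1/x_1\cdots x_r]$.

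\begin{itemize}
\item By the absolute Abhyankar lemma, $W$ is split by the Kummer cover $U'=U_{\mathcal O}[T_i]/(T_i^{n_i}-x_i)$, so $U'$ maps onto $W$. The proof of that lemma is just Zariski--Nagata purity on the regular scheme $\Spec\mathcal O[T_i]/(T_i^{n_i}-x_i)$. It uses only tameness at the characteristic-zero generic points of $\{x_i=0\}$, and it does not need $p\nmid n_i$.
\item Both $U'$ and $W$ have a single point over the height-one prime $(\varpi)$, and $W$ is \'etale there.
\item Hence $\deg W$ is the degree of a separable subextension of $F(\bar x_i^{1/n_i})/F$, where $F=\mathrm{Frac}(\mathcal O/\varpi)$.
\item Such a subextension lies in $F(\bar x_i^{1/m_i})$, with $m_i$ the prime-to-$p$ part of $n_i$, so $\deg W$ divides $\prod m_i$.
\end{itemize}

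Applying this to the Galois closure shows that all local monodromy at points of $D_s$ has order prime to $p$. With tameness supplied, your use of $Rj_*$ and relative purity in place of the paper's $j_!$ and Poincar\'e duality is a harmless variant of the same argument.
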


We remark this is essentially \cite[Prop. 7.7]{ellenberg-venkatesh-westerland} for the case of a smooth and proper morphism $\pi$, rather than a finite \'etale cover. Our proof strategy is the same, using Deligne's theory of vanishing cycles, with minor modifications.

\begin{proof}
    Observe that it suffices to demonstrate the isomorphism (\ref{specialization}) with $\Z/\ell^n\Z$ coefficients for all $n$. Let $\mathcal F$ be the constant sheaf $\Z/\ell^n\Z$ on $V$. Then we have a Leray spectral sequence
    $$H^p(U_{\bar\eta},R^q\pi_*\mathcal F) \Longrightarrow H^{p+q}(V_{\bar\eta},\mathcal F),$$
    and similarly for the case of $\bar s$-fibers. In light of these spectral sequences, it suffices to demonstrate $G$-equivariant isomorphisms
    $$H^p(U_{\bar\eta},R^q\pi_*\mathcal F) \to H^p(U_{\bar s},R^q\pi_*\mathcal F)$$
    which are functorial in $\mathcal F$.

    Since $\pi$ is smooth and proper, the sheaf $R^q\pi_*\mathcal F$ is locally constant. Let $j:U \to X$ be the inclusion, and let $\mathcal G^q = j_!R^q\pi_*\mathcal F$. $U_{\bar\eta}$ is a smooth variety, so by Poincar\`e duality and the definition of compactly supported \'etale cohomology we have
    $$H^p(U_{\bar\eta},R^q\pi_*\mathcal F) \cong H^{2d-p}_c(U_{\bar\eta},R^q\pi_*\mathcal F)^\vee = H^{2d-p}(X_{\bar\eta},\mathcal G^q)^\vee,$$
    where $d = \dim X_{\bar\eta}$ and the duals are as $\Z/\ell^n\Z$-modules. The above also holds for $\bar s$ in place of $\bar\eta$.

    The specialization map from the cohomology of $X_{\bar\eta}$ to the cohomology of $X_{\bar s}$ is part of an exact triangle in terms of the (hyper)cohomology of Deligne's complex of vanishing cycles (see \cite[Exp. XIII, 2.1.8.9]{sga7}):
    $$\dots \to H^i(X_{\bar\eta},\mathcal G^q) \to H^i(X_{\bar s},\mathcal G^q) \to H^i(X_{\bar s},R\Phi_{\bar\eta}(\mathcal G)) \to \dots$$
    Since $R^q\pi_*\mathcal F$ is locally constant and is vacuously tamely ramified along $D$ by virtue of $A$ being characteristic 0, by \cite[Exp. XIII, Lemme 2.1.11]{sga7} the aptronymous complex of vanishing cycles, in fact, vanishes. As such, the specialization map in this case is an isomorphism, and so we find
    \begin{gather}\label{composite isomorphism}
        H^p(U_{\bar\eta},R^q\pi_*\mathcal F) \cong H^{2d-p}(X_{\bar\eta},\mathcal G^q)^\vee \cong H^{2d-p}(X_{\bar s},\mathcal G^q)^\vee \cong H^p(U_{\bar s},R^q\pi_*\mathcal F)
    \end{gather}
    Since Poincar\'e duality and the specialization map are both functorial, these isomorphisms are functorial in $\mathcal F$; moreover, the functoriality of these maps also ensures $G$-equivariance, and so we have a $G$-isomorphism. Finally, note that the specialization map is compatible with the pairings for Poincar\'e duality at $\bar\eta$ and $\bar s$, so the composite isomorphism (\ref{composite isomorphism}) is in fact the specialization map $H^p(U_{\bar\eta},R^q\pi_*\mathcal F) \to H^p(U_{\bar s},R^q\pi_*\mathcal F)$. Then since specialization is itself functorial, it follows that the isomorphism (\ref{specialization}) obtained from the Leray spectral sequence is the specialization map for $V$.
\end{proof}

Write $Z_g = Y_g \times_{\M_{g}} \Jac_{g,\Z_p}$, which is a scheme since $\Jac_g \to \M_g$ is representable. Since $Y_g \to \M_{g,\Z_p}$ is a quotient map by a finite group $G$, the action of $G$ can be pulled back to $Z_g$ so that $Z_g \to \Jac_{g,\Z_p}$ is a finite Galois cover with Galois group $G$. Moreover, $Z_g \to Y_g$ is smooth and proper since $\Jac_g \to \M_g$ is smooth and proper.

We equip the tautological ring $R$ of Theorem \ref{stability} with the following Frobenius action: we specify that $\Frob_q$ acts on the class $\nu_{ij}$ by multiplication by $q^{i+j}$.

\begin{corollary}\label{char p stability cor}
    For each $k \leq \frac{2g-3}3$, there is an isomorphism of Frobenius modules
    \begin{gather}\label{char p stability}
        R^k_{\Q_\ell} \to H^k(\Jac_{g,\overline\F_q},\Q_\ell)
    \end{gather}
\end{corollary}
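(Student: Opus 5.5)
We build the isomorphism \eqref{char p stability} as a chain of identifications running from the analytic side to characteristic $p$. First, Theorem \ref{stability} gives an isomorphism $R^k \to H^k(\Jacan_g,\Q)$ for $k\leq\frac{2g-3}3$, and tensoring with $\Q_\ell$ gives $R^k_{\Q_\ell}\cong H^k(\Jacan_g,\Q_\ell)$. Next, Theorem \ref{analytification} together with Artin's comparison theorem identifies this with $H^k(\Jac_{g,\C},\Q_\ell)$. Since étale cohomology is invariant under extension of algebraically closed fields of characteristic zero, this in turn equals $H^k(\Jac_{g,\overline{\Q}_p},\Q_\ell)$, where $\overline{\Q}_p$ is the geometric generic point of the strict Henselization $A$ of $\Z_p$. (If $A$ is not literally $\Z_p$, we base change $\overline Y_g$ to $A$.)

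To pass to characteristic $p$ we use the cover $Z_g\to\Jac_{g,\Z_p}$. Because $Z_g\to\Jac_{g,\Z_p}$ is a finite Galois cover with group $G$ and we work with $\Q_\ell$-coefficients, we have $H^k(\Jac_{g,\bar x},\Q_\ell)=H^k(Z_{g,\bar x},\Q_\ell)^G$ for both $\bar x=\bar\eta$ and $\bar x=\bar s$. We then apply Proposition \ref{specialization prop} with $X=\overline Y_g$, $U=Y_g$, $V=Z_g$ and $\pi:Z_g\to Y_g$, which is smooth and proper. This yields a $G$-equivariant specialization isomorphism $H^k(Z_{g,\bar\eta},\Q_\ell)\cong H^k(Z_{g,\bar s},\Q_\ell)$, and taking $G$-invariants gives $H^k(\Jac_{g,\overline{\Q}_p},\Q_\ell)\cong H^k(\Jac_{g,\overline\F_q},\Q_\ell)$. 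Composing all of these produces a $\Q_\ell$-linear isomorphism $R^k_{\Q_\ell}\to H^k(\Jac_{g,\overline\F_q},\Q_\ell)$. The map is also a ring map, since specialization and comparison respect cup product, and on generators it sends $\nu_{ij}$ to the $\ell$-adic cycle class of the corresponding Chow-theoretic class on the special fiber.

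It remains to check that Frobenius acts on the image of $\nu_{ij}$ by $q^{i+j}$. The classes $\nu_{ij}$ are defined algebraically, as pushforwards of $c_1(\omega)^{i+1}c_1(\mathcal L)^j$, and these make sense over $\Z_p$. The $\ell$-adic cycle class map is compatible with specialization, so the image of $\nu_{ij}$ is the cycle class of a codimension-$(i+j)$ cycle defined over $\F_q$, or at least of a Chern-class expression defined over $\F_q$. Such a class lies in $H^{2(i+j)}(\Jac_{g,\overline\F_q},\Q_\ell(i+j))^{\Frob}$. Untwisting the Tate twist, geometric Frobenius acts on it in $H^{2(i+j)}(\Jac_{g,\overline\F_q},\Q_\ell)$ by $q^{i+j}$. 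Because the map is a ring isomorphism and Frobenius acts multiplicatively, the action on monomials in the $\nu_{ij}$ matches the action prescribed on $R$.

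The main obstacle is the rigidification. The classes $\nu_{ij}$ are built on $\J_g$ using $\pi_!$ along $\J_g\to\Jac_g$, and the universal line bundle $\mathcal L$ need not descend to $\Jac_g$. We therefore have to make sense of this pushforward algebraically, and compatibly with specialization over $\Z_p$. The first approach we would try is to note that $\J_g\to\Jac_g$ is a $\mathbb G_m$-gerbe, so rational cohomology pulls back isomorphically. The $\nu_{ij}$ can then be treated as Chern-class expressions in $c_1(\omega)$ and $c_1(\mathcal L)$ pushed forward along the universal curve over $\J_g$ (which has relative dimension $1$), and we would verify the Frobenius weight on $\J_{g,\Z_p}$ directly. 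A secondary point is that étale cohomology of the DM stacks is computed through the finite cover $Z_g$, and the Galois-invariants argument requires the characteristic of the coefficients to be $0$, which holds for $\Q_\ell$.
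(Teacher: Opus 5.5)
Your proposal is correct and follows the paper's proof essentially step for step: Theorem \ref{stability} together with Theorem \ref{analytification} and the \'etale--singular comparison, then Proposition \ref{specialization prop} applied to $Z_g\to Y_g$ followed by taking $G$-invariants, and Tate-type algebraic classes for the Frobenius eigenvalues. Routing through $\overline{\Q}_p$, and checking Frobenius at the end rather than at the start, are only cosmetic differences.

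One correction to your side remark on rigidification, a point the paper itself glosses over. Pullback along the $\mathbb G_m$-gerbe $\J_g\to\Jac_g$ is injective but not an isomorphism on rational cohomology, since the fibre $B\mathbb G_m$ contributes a polynomial generator in degree $2$. So the raw classes built from $c_1(\mathcal L)$ need not descend. You should first subtract from $c_1(\mathcal L)$ the pullback of a weight-one rational class such as $c_1(\det Rp_*\mathcal L)/(1-g)$, where $p:\mathcal C\to\J_g$ is the universal curve. After that, injectivity is enough for your plan of checking Frobenius weights upstairs on $\J_g$.
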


\begin{proof}
    We define the images of the $\nu_{ij}$ in $H^k(\Jac_{g,\overline\F_q},\Q_\ell)$ as we did in $H^k(\Jacan_g,\Q)$ in terms of the (algebraic) sheaves $\omega_{\mathcal C/\J}$ and $\mathcal L$. Note that then the morphism thusly defined then factors through the Chow ring of $\Jac_{g,\overline\F_q}$, meaning that the images of the $\nu_{ij}$ are of Tate type and have Frobenius eigenvalue $q^{i+j}$. Thus, we have a morphism as in \ref{char p stability} which is Frobenius equivariant, and we need only to check that it is an isomorphism of $\Q_\ell$-modules.

    Choose an embedding $\Q_p \to \C$, yielding a $\C$-point of $\Spec\Z_p$. By Proposition \ref{specialization prop} applied to the morphism $Z_g \to Y_g$, there is a $G$-equivariant isomorphism
    $$H^k(Z_{g,\C},\Q_\ell) \to H^k(Z_{g,\overline\F_q},\Q_\ell)$$
    Upon taking $G$-invariants of the above, Hochschild-Serre spectral sequence then guarantees an isomorphism
    $$H^k(\Jac_{g,\C},\Q_\ell) \to H^k(\Jac_{g,\overline\F_q},\Q_\ell)$$
    There is also a standard comparison isomorphism
    $$H^k(Z_{g,\C},\Q_\ell) \to H^k(Z_{g,\C}^\an,\Q_\ell)$$
    between \'etale and singular cohomology which is functorial and therefore $G$-equivariant, and as such descends to an isomorphism
    $$H^k(\Jac_{g,\C},\Q_\ell) \to H^k(\Jacan_{g,\C},\Q_\ell)$$
    owing again to the Hochschild-Serre spectral sequence. The composite isomorphism
    $$H^k(\Jac_{g,\overline\F_q},\Q_\ell) \to H^k(\Jac_{g,\C},\Q_\ell) \to H^k(\Jacan_{g,\C},\Q_\ell)$$
    is functorial, and so in particular the classes $\nu_{ij}$ in $H^k(\Jac_{g,\overline\F_q},\Q_\ell)$ map to the corresponding tautological classes $H^k(\Jacan_{g,\C},\Q_\ell)$ since the bundles $\omega_{\mathcal C/\J}$ and $\mathcal L$ can be defined integrally. Thus, the above composition identifies (\ref{char p stability}) with the base change of the isomorphism in Theorem \ref{stability} to $\Q_\ell$.
\end{proof}

Let us also make note of the fact that even for $k > \frac{2g-3}3$, there is a (Frobenius-equivariant) map $R^k_{\Q_\ell} \to H^k(\Jac_{g,\overline\F_q},\Q_\ell)$, whose image and cokernel we will consider in the sequel.

\section{Computing with stable classes}

We are now prepared to make our main computation. In order to invoke the Behrend-Grothendieck-Lefschetz trace formula, we must reformulate Theorem \ref{char p stability cor} in terms of compactly supported cohomology. Since $\Jac_{g,\overline\F_q}$ is smooth, this is a simple matter of applying Poincar\'e duality, and we obtain a map
$$R^{2d_g-k}_{\Q_\ell} \to H^k_c(\Jac_{g,\overline\F_q},\Q_\ell)$$
which is an isomorphism for $2d \geq k \geq 2d - \frac{2g-3}3$, where $d_g = \dim \Jac_{g,\overline \F_q} = 4g-3$. Note that this isomorphism is \textit{not} Frobenius equivariant for the given Frobenius action on $R$ due to the Tate twist implicit in Poincar\'e duality; instead, the image of $\nu_{ij} \in R^{2d_g-k}_{\Q_\ell}$ will have Frobenius eigenvalue $q^{k/2}$.

Let us denote by $A_g^k$ the image of $R^{2d_g-k}_{\Q_\ell} \to H^k_c(\Jac_{g,\overline\F_q},\Q_\ell)$, and let $B_g^k$ be the cokernel, so that
$$\Tr(\Frob_q,H^k_c(\Jac_{g,\overline\F_q},\Q_\ell)) = q^{k/2}\cdot\dim A_g^k + \Tr(\Frob_q,B_g^k)$$
Then, noting that $R$ and $A_g$ are concentrated in even degrees, the trace formula for $\Jac_{g}$ reads
\begin{align*}
    \#\Jac_g&(\F_q) = \sum_{k=2d_g-\frac{2g-3}3}^{2d_g}q^{k/2}\cdot\dim A_g^k + \sum_{k=0}^{2d_g-\frac{2g-3}3}(-1)^k(q^{k/2}\cdot\dim A_g^k + \Tr(\Frob_q,B_g^k)) \\
    &= q^d_g\sum_{k=0}^{\frac{2g-3}3}q^{-k/2}\cdot\dim R_{\Q_\ell}^{k} + q^{d_g}\sum_{k=\frac{2g-3}3}^{2d_g}q^{-k/2}\cdot\dim A_{g}^k + \sum_{k=0}^{2d_g-\frac{2g-3}3}(-1)^k\Tr(\Frob_q,B_g^k)
\end{align*}
The first sum converges to $HS_R(q^{-1/2})$, where $HS_R$ is the Hilbert series of the graded ring $R$. By Heuristic \ref{average-class-number}, the third sum is $o(q^{d_g})$, and the second sum can be bounded as
$$\sum_{k=\frac{2g-3}3}^{2d_g}q^{-k/2}\cdot\dim A_{g}^k \leq \sum_{k=\frac{2g-3}3}^{\infty}q^{-k/2}\cdot\dim R_{\Q_\ell}^k$$
which tends to $0$ as $g\to\infty$ since, say, the Hilbert series for $R$ has a positive radius of convergence. Thus, in aggregate we find that
$$\#\Jac_g(\F_q) \sim q^{4g-3}HS_R(q^{-1/2})$$
as $g\to\infty$. With this asymptotic point count in hand, we may conclude as follows.

\begin{theorem}\label{main class number theorem}
    Assume Heuristic \ref{mg hypothesis} and Heuristic \ref{average-class-number}. Let $h_g$ denote the mean class number of curves in smooth $\F_q$-curves of genus $g$. Then we have
    $$h_g \sim c(q)q^g$$
    as $g\to\infty$, where
    $$c(q) = \frac{1}{1-q^{-1}}\cdot\prod_{k=2}^\infty\Big(\frac{1}{1-q^{-k}}\Big)^{k+1}$$
\end{theorem}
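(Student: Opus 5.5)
We will obtain the theorem by dividing two asymptotic point counts. The mean class number is
$$h_g = \frac{\#\Jac_g(\F_q)}{\#\M_g(\F_q)},$$
where both counts are weighted by automorphisms. This is legitimate because $\Jac_g\to\M_g$ is representable and the fiber over each $C\in\M_g(\F_q)$ is the Jacobian of $C$, so the stacky count of $\Jac_g$ is $\sum_C h(C)/\#\mathrm{Aut}(C)$.

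For the numerator we use the asymptotic derived just above, $\#\Jac_g(\F_q)\sim q^{4g-3}HS_R(q^{-1/2})$, which assumes Heuristic \ref{average-class-number}. For the denominator we take the result of \cite{achter-wood} under Heuristic \ref{mg hypothesis}. There the stable cohomology of $\M_g$ is the Mumford--Morita--Miller ring $\Q[\kappa_1,\kappa_2,\dots]$, with $\kappa_i$ in degree $2i$. The same argument as above then gives
$$\#\M_g(\F_q)\sim q^{3g-3}\prod_{i\ge1}(1-q^{-i})^{-1}.$$
Taking the quotient, $h_g\sim q^{g}\,HS_R(q^{-1/2})\prod_{i\ge1}(1-q^{-i})$, so it remains to compute $HS_R$ and simplify.

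Since $R$ is a free polynomial algebra, its Hilbert series is a product over generators. The generator $\nu_{ij}$ has degree $2(i+j)$, so at $t=q^{-1/2}$ it contributes a factor $(1-q^{-(i+j)})^{-1}$. We therefore count, for each $n=i+j$, the pairs $(i,j)$ with $i\ge-1$, $j\ge0$, excluding $(-1,0)$ and $(0,1)$.
\begin{itemize}
\item For $n\ge2$ there are $n+2$ pairs, namely $j=0,\dots,n+1$, and none is excluded.
\item For $n=1$ the pairs are $(1,0),(0,1),(-1,2)$; removing $(0,1)$ leaves two.
\item For $n=0$ the pairs are $(0,0)$ and $(-1,1)$; removing $(-1,0)$ excludes nothing here, so both remain.
\end{itemize}

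The $n=0$ count is the main obstacle. Two generators of degree zero would make $HS_R$ diverge, because $\Q[\nu_{00},\nu_{-1,1}]$ is infinite-dimensional in degree zero. I would resolve this by checking the construction. The class $\nu_{00}=\pi_!(c_1(\omega))$ is the constant $2g-2$, and $\nu_{-1,1}=\pi_!(c_1(\mathcal L))$ is the degree of $\mathcal L$, which is $0$ on the degree-zero component. Neither is a genuine generator, and in \cite{ebert-randall-williams} the conventions exclude them. I will therefore interpret $R$ as generated only by $\nu_{ij}$ with $i+j\ge1$, up to the normalization of the exclusions.

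With that convention,
$$HS_R(q^{-1/2})=(1-q^{-1})^{-2}\prod_{n\ge2}(1-q^{-n})^{-(n+2)}.$$
Multiplying by $\prod_{i\ge1}(1-q^{-i})$ cancels one factor in each degree. This leaves
$$c(q)=(1-q^{-1})^{-1}\prod_{k\ge2}(1-q^{-k})^{-(k+1)},$$
which matches the statement.

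Finally, both asymptotics have nonzero limiting constants, so the quotient of asymptotics is valid. Convergence of the infinite products is immediate since $q>1$.
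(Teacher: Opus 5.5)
Your proposal is correct and follows the paper's proof. You divide the asymptotic $\#\Jac_g(\F_q)\sim q^{4g-3}HS_R(q^{-1/2})$ by the Achter et al.\ asymptotic for $\#\M_g(\F_q)$, then evaluate the Hilbert series of the free algebra $R$ (two generators in degree $2$ and $k+2$ in degree $2k$ for $k\ge 2$). You also correctly catch that the degree-zero generators $\nu_{00},\nu_{-1,1}$, allowed by the paper's literal definition of $R$, must be discarded; the paper's formula for $HS_R$ silently drops them in the same way.
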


\begin{proof}
    By the analysis carried out in \cite[\S4]{achter-wood}, we have
    $$\#\M_g(\F_q) \sim q^{3g-3} \prod_{k=1}^\infty(1-q^{-k})^{-1}$$
    as $g\to\infty$. Since $R$ is a free algebra in the variables $\nu_{ij}$, we find that
    $$HS_R(t) = \frac{1}{(1-t^2)^{2}}\prod_{k=2}^\infty\frac1{(1-t^{2k})^{k+2}}$$
    Combining the above, we have
    $$h_g = \frac{1}{\#\M_g(\F_q)}\sum_{C\in\M_g(\F_q)}h(C) = \frac{\#\Jac_g(\F_q)}{\#\M_g(\F_q)} \sim q^g \cdot \frac{1}{1-q^{-1}}\cdot\prod_{k=2}^\infty\Big(\frac{1}{1-q^{-k}}\Big)^{k+1}$$
    as desired.
\end{proof}

\section{The distribution of zeta zeros}

In this section, we discuss present some applications of Theorem \ref{main class number theorem} to the asymptotic distributions of the zeros of zeta functions of curves. For a fixed curve $C \in \M_g(\F_q)$, let $\omega_1,\dots,\omega_{2g}$ be the roots of the zeta function $Z(C,t)$. Define a measure on the unit circle in $\C$ by
$$\mu_C = \frac\pi{g}\sum_{i=1}^{2g}\delta_{\omega_i/\sqrt q}$$
We can then define a measure encoding the distribution of zeta zeros for all curves in $\M_g(\F_q)$ by
$$\mu_g = \frac1{\#\M_g(\F_q)}\sum_{C \in \M_g(\F_q)}\mu_C$$
It would then be interesting to determine what can be said about the weak limit of the $\mu_g$ as $g\to\infty$ (if it exists).

\subsection{Asymptotically exact families} To relate our average class number heuristic to studying the limit of the $\mu_g$, we will use the framework of Tsfasman and Vladut, which is best treated in \cite{tsfasman-vladut}. We will quickly review the key notions here. Let $C_1,C_2,\dots$ be an infinite family of curves over $\F_q$, no two of which are isomorphic. We may consider the limits
$$\phi_m = \lim_{i\to\infty}\frac{B_m(C_i)}{g(C_i)}$$
where $B_m(C)$ denotes the number of degree $m$ prime divisors of $C$; if all the $\phi_m$ exist, we say that the family $\{C_i\}$ is \textit{asymptotically exact}. Tsfasman-Vladut relate the asymptotic behavior of $h(C_i)$ to the $\phi_m$ in the following way:

\begin{theorem}[{\cite[Theorem D']{tsfasman-vladut}}] \label{brauer-siegel}
    Let $\{C_i\}$ be an asymptotically exact family of curves over $\F_q$. Then we have
    $$\lim_{i\to\infty}\frac{\log_q h(C_i)}{g(C_i)} = 1 + \sum_{m=1}^\infty\phi_m\log_q\frac{1}{1-q^{-m}}$$
\end{theorem}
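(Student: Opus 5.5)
The plan is to derive the formula from the product representation of the class number through the zeta function, and then take the limit along the family. For a curve $C$ of genus $g$ over $\F_q$ we have $h(C) = \prod_{i=1}^{2g}(1-\omega_i)$, the numerator of $Z(C,t)$ evaluated at $t=1$. Writing the zeta function as an Euler product,
$$Z(C,t) = \prod_{m\geq1}(1-t^m)^{-B_m(C)},$$
and letting $P_C(t)=\prod_i(1-\omega_i t)$, we get
$$\log h(C) = \log P_C(1) = \lim_{t\to1^-}\Big(\log Z(C,t) + \log(1-t)+\log(1-qt)\Big).$$
The first concrete step is to replace the evaluation at $t=1$ (which sits on the boundary of the region where the Euler product converges absolutely) by the evaluation at $t = q^{-1}$. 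This uses the functional equation $P_C(t) = q^g t^{2g}P_C(1/(qt))$, which gives $P_C(1) = q^{g}\,q^{-2g}\cdot q^{2g}P_C(q^{-1})$; more precisely, $h(C) = q^{g}P_C(q^{-1})$. At $t=q^{-1}$ the Euler product converges absolutely, since $\sum_m B_m q^{-m}$ is comparable to $\sum_m q^{m}/m \cdot q^{-m}\cdot(\text{stuff})$. Strictly speaking it only converges conditionally at $s=1$, so I would handle this with the standard Tsfasman--Vladut truncation.

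Dividing by $g$ then gives
$$\frac{\log_q h(C)}{g} = 1 + \frac1g\sum_{m}B_m\log_q\frac{1}{1-q^{-m}} + \frac{\log_q(1-q^{-1})+\log_q(1-1)\cdots}{g}.$$
At $t=q^{-1}$ the factor $1-qt$ vanishes, so I would instead evaluate at $t=q^{-s}$ with $s\to1^+$ and track the pole explicitly. Alternatively I would work directly with the explicit-formula expression
$$\log P_C(q^{-1}) = \sum_{m} B_m\log\frac{1}{1-q^{-m}} + (\text{pole terms}) = -\sum_{n}\frac{1}{n}\sum_i(\omega_i/q)^n.$$

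The finite-$m$ terms converge to $\sum_{m\leq M}\phi_m\log_q\frac1{1-q^{-m}}$ by asymptotic exactness. The main obstacle is controlling the tail $\frac1g\sum_{m>M}B_m\log\frac{1}{1-q^{-m}} \approx \frac1g\sum_{m>M}B_m q^{-m}$ uniformly in $i$. For this I would use the Riemann hypothesis, which gives $mB_m \leq q^m + 2gq^{m/2}+1$. With this bound, the part of the tail with $m\leq 2\log_q g$ or so is bounded by $\frac1g\sum_{m>M}(q^m/m)q^{-m}$, plus a contribution $\sum_{m>M}2q^{-m/2}/m$, which is small in $M$. The remaining problem is the harmonic-type $\frac1g\sum_{M<m\leq N}1/m \sim \frac{\log N}{g}$, which forces a cutoff $N = o(e^{g})$. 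Beyond the cutoff, I would switch to the zero side: write $\log P_C(q^{-1})$ via $\sum_n\frac1n\sum_i(\omega_i/q)^n$, where $|\omega_i/q| = q^{-1/2}$, so that the terms with $n>N$ contribute $O(g q^{-N/2})$, which is negligible after dividing by $g$. Matching the two expansions through the identity $\sum_{d\mid n} dB_d = q^n+1-\sum_i\omega_i^n$ then closes the argument. Letting $i\to\infty$ first and $M\to\infty$ second yields the stated limit.
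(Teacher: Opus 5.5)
Your final argument is correct, and it is essentially the computation the paper carries out (following Tsfasman's Lemma~2) in the proof of Theorem~\ref{zeta zero theorem}. The paper does not prove this statement itself; it only cites Tsfasman--Vladut. The steps are:
\begin{itemize}
\item write $h(C)=q^{g}P_C(q^{-1})$;
\item expand $\log P_C(q^{-1})=-\sum_{n}\frac1n\sum_i(\omega_i/q)^n$, which converges absolutely since $|\omega_i/q|=q^{-1/2}$;
\item drop the terms with $n>N$, at a cost of $O(q^{-N/2})$ after dividing by $g$;
\item rewrite the terms with $n\le N$ using $\sum_i\omega_i^n=q^n+1-\sum_{d\mid n}dB_d$;
\item control $\frac1g\sum_{M<m\le N}B_m\log\frac{1}{1-q^{-m}}$ uniformly via $mB_m\le q^m+1+2gq^{m/2}$.
\end{itemize}
That last tail estimate is the one ingredient the paper's computation omits, because there positivity forces every $\phi_m$ to vanish. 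For the general statement it is exactly what is needed.

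Two corrections are needed.

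First, discard the Euler-product detour. The product $\prod_m(1-q^{-m})^{-B_m}$ diverges to $+\infty$ at $t=q^{-1}$, neither absolutely nor conditionally convergent. All its factors exceed $1$, $Z(C,t)$ has a pole at $t=q^{-1}$, and $B_mq^{-m}\approx 1/m$. So $\sum_m B_m\log\frac{1}{1-q^{-m}}$ may only ever appear truncated at $N$, exactly as it comes out of the zero-side expansion.

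Second, the cutoff must satisfy $N\to\infty$ with $\log N=o(g)$, for instance $N=g$; the condition $N=o(e^{g})$ is not enough. The harmonic-type terms you discard have order $\frac{\log N}{g}$: one comes from $q^n+1$ on the zero side, the other from $q^m+1$ in the Weil bound on the tail. For $N=e^{g}/g$, which is $o(e^g)$, this ratio tends to $1$, not $0$.

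With these fixes the conclusion follows as you describe.
\begin{itemize}
\item For the $\liminf$: use positivity, $B_m/g\to\phi_m$, and $\sum_{k\le N/m}q^{-km}/k\to\log\frac{1}{1-q^{-m}}$ for each fixed $m\le M$.
\item For the $\limsup$: bound each truncated inner sum by $\log\frac{1}{1-q^{-m}}$ and apply the Weil tail estimate.
\end{itemize}
Because of this, you do not need the paper's separate estimate of the truncation error $\sum_{m\le N}B_m\sum_{k>N/m}q^{-km}/k$.
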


In other words, the growth rate of the class numbers of the $C_i$ should be exponential in $g$, with base equal to $q$ to the power of the right hand side. Comparing with Theorem \ref{main class number theorem}, we are led to the rough prediction that for an ``average'' family of curves, the exponential growth rate of $h(C_i)$ is as small as possible, and the $\phi_m$ are all zero.

This prediction is rather striking, for the following reason. In studying the arithmetic statistics of curves over finite fields in the $g\to\infty$ limit, one expects in some sense that the central difficulty or complexity comes from the dominance of high gonality curves as the genus grows. Indeed, many questions regarding, say, point counts become far more tractable when restricting to low gonality strata of $\M_g$; see e.g. \cite{hyperelliptic}, \cite{trigonal}.

However, it can easily be seen that a family of curves of bounded gonality (which some would call a \textit{tame} family) has $\phi_m = 0$ for all $m$. Combining this observation with the above prediction for the random curve, we find that, at least with respect to their class numbers, random curves in $\M_g(\F_q)$ behave as the low-gonality curves do, despite the low-gonality strata being positive codimension.

The following theorem of Tsfasman and Vladut can be used to relate the above tame behavior to the limiting distribution of zeta zeros:

\begin{theorem}[{\cite[Corollary B']{tsfasman-vladut}}] \label{explicit-formula}
    Let $\mathcal C = \{C_i\}$ be an asymptotically exact family of curves over $\F_q$. Then the sequence $\mu_{C_1},\mu_{C_2},\dots$ converges weakly to a measure $\mu_{\mathcal C}$ which is absolutely continuous with respect to the Lebesgue measure on the unit circle and is given by the density
    $$M_{\mathcal C}(t) = 1 - \sum_{m=1}^\infty m\phi_m h_m(t)$$
    for some explicit functions $h_m(t)$ depending only on $m$. In particular, when all the $\phi_m$ are zero, the zeta zeros of the $C_i$ equidistribute on the circle with respect to the Lebesgue measure.
\end{theorem}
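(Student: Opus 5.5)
The plan is to prove the statement by the method of moments (Fourier coefficients) on the unit circle, using the explicit relation between point counts and zeta zeros. Write $\alpha_j = \omega_j/\sqrt q$, a point on the unit circle, and $z = e^{it}$. Since $\mu_C$ has total mass $2\pi$, Lebesgue measure $dt$ is the natural comparison measure, and it suffices to show two things: first, that for every $n \geq 1$ the Fourier coefficients $\int z^{n}\,d\mu_{C_i}$ (and their conjugates) converge; second, that the limiting coefficients are those of an $L^1$ density of the stated form. Weak convergence then follows because the masses are uniformly bounded and trigonometric polynomials are dense in $C(S^1)$.

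First, for each $n\geq1$ I would use the Lefschetz formula together with counting points by closed points. This gives
$$\#C(\F_{q^n}) = q^n + 1 - \sum_{j=1}^{2g}\omega_j^n = \sum_{m\mid n} mB_m(C).$$
Hence
$$\int z^n\,d\mu_C = \frac{\pi}{g}\sum_j \alpha_j^n = \frac{\pi}{g}\,q^{-n/2}\Big(q^n+1-\sum_{m\mid n}mB_m(C)\Big).$$
For fixed $n$, the term $(q^n+1)q^{-n/2}/g$ tends to $0$ as $g(C_i)\to\infty$, because no two $C_i$ are isomorphic and so the genera go to infinity. Asymptotic exactness then gives
$$\int z^n\,d\mu_{C_i}\to -\pi q^{-n/2}\sum_{m\mid n}m\phi_m,$$
and the same holds for $\bar z^n$ by conjugation, since the $\alpha_j$ are closed under conjugation. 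The zeroth coefficient is $2\pi$, consistent with Lebesgue measure.

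Next I would reassemble these limits into a density. Formally the limit measure is
$$dt\Big(1 - \sum_{n\ge1} q^{-n/2}\sum_{m\mid n} m\phi_m\cdot 2\cos(nt)\Big).$$
Substituting $n = km$ and exchanging sums yields $1-\sum_m m\phi_m h_m(t)$ with
$$h_m(t) = \sum_{k\ge1} 2q^{-km/2}\cos(kmt),$$
which is an explicit Poisson-kernel-type function depending only on $m$ (up to normalization conventions). This manipulation, and the claim that the result is an honest $L^1$ (indeed continuous) density, are justified once we know absolute convergence:
$$\sum_m m\phi_m\sum_k q^{-km/2} = \sum_m \frac{m\phi_m}{q^{m/2}-1} < \infty.$$
Given that, the limit measure is determined by its Fourier coefficients, so the whole sequence converges to it. When all $\phi_m=0$, every nonzero coefficient vanishes and the limit is Lebesgue measure.

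The main obstacle is this convergence bound, i.e. the Drinfeld--Vladut-type \emph{basic inequality}
$$\sum_m \frac{m\phi_m}{q^{m/2}-1}\le 1.$$
I would try to prove it by positivity, using that each $\mu_C$ is a positive measure. Pair $\mu_C$ with a nonnegative trigonometric polynomial $f_N(t) = 1 + 2\sum_{n\le N} c_n\cos(nt)$, with $c_n\geq0$ of Fej\'er type: for instance $f_N = |\sum_{n\le N} r^n e^{int}|^2$ normalized, with $r$ close to $1$. Expanding $\int f_N\,d\mu_C\ge 0$ via the moment identity above gives an inequality of the form
$$\sum_{m\le N} mB_m\sum_{k\le N/m} c_{km}q^{-km/2} \le g + O_N(1).$$
I would then divide by $g$ and let $i\to\infty$. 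Finally I would let $N\to\infty$ and $r\to1$, so that $c_n\to1$; monotone convergence, applicable because all terms are nonnegative, yields the basic inequality.
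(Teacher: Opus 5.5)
Your argument is correct and is essentially the standard Tsfasman--Vladut proof that the paper only cites. That proof computes the Fourier coefficients of $\mu_{C_i}$ from $\#C(\F_{q^n})=\sum_{m\mid n}mB_m(C)$, and gets absolute summability from the basic inequality $\sum_m m\phi_m/(q^{m/2}-1)\le 1$, proved by pairing with a nonnegative Fej\'er-type kernel.

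There is one normalization slip. Since $\mu_C$ and $dt$ both have mass $2\pi$, the limiting coefficient $-\pi q^{-n/2}\sum_{m\mid n}m\phi_m$ corresponds to the term $-q^{-n/2}\sum_{m\mid n}m\phi_m\cos(nt)$ in the density. So $h_m(t)=\sum_{k\ge1}q^{-km/2}\cos(kmt)$, with no factor $2$. With your factor, a family attaining the Drinfeld--Vladut bound $\phi_1=\sqrt q-1$ would get density $-1$ at $t=0$, which contradicts positivity.
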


Thus, the prediction that class numbers of random curves behave tamely on average as $g\to\infty$ yields a very strong consequence for how the zeta zeros of a random curve distribute, at least when such curves are viewed in families. In fact, we can also adapt the methods of Tsfasman in \cite{tsfasman} to make a more uniform statement about all of $\M_g$ and the measures $\mu_g$ as $g\to\infty$.

\begin{theorem}\label{zeta zero theorem}
    Assume Heuristic \ref{mg hypothesis} and Heuristic \ref{average-class-number}. Then the $\mu_{g}$ converge weakly to the uniform measure on the unit circle.
\end{theorem}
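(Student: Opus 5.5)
The plan is to prove weak convergence through Fourier coefficients. Trigonometric polynomials are dense in $C(S^1)$, and the measures $\mu_g$ have bounded mass. So it suffices to show that for every integer $n\ge1$ the moment $\int z^n\,d\mu_g$ tends to $0$; the case $n=0$ is the normalization, and negative $n$ follow by complex conjugation, since the $\omega_i$ are closed under conjugation. For a single curve, $\sum_i \omega_i^n = q^n+1-N_n(C)$, where $N_n(C)=\#C(\F_{q^n})=\sum_{m\mid n} mB_m(C)$. Hence, up to the normalizing constant,
$$\int z^n\,d\mu_C \asymp \frac{q^{-n/2}}{g}\Big(q^n+1-\sum_{m\mid n}mB_m(C)\Big).$$
The term $q^{n/2}/g$ tends to $0$. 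We are therefore reduced to the following claim: for each fixed $m$, the average of $B_m(C)/g$ over $\M_g(\F_q)$, weighted by $1/\#\mathrm{Aut}$, tends to $0$ as $g\to\infty$.

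To prove the claim I would use Theorem \ref{main class number theorem} together with a uniform, non-asymptotic version of Theorem \ref{brauer-siegel}, adapting Tsfasman's argument in \cite{tsfasman}. The key intermediate inequality is the following: for each $M$ there are $\delta_M>0$ and a function $\epsilon_M(g)=o(g)$, both independent of the curve, such that for every $C\in\M_g(\F_q)$
$$\log_q h(C)\ \ge\ g+\delta_M\sum_{m\le M}B_m(C)-\epsilon_M(g).$$
To obtain it, start from $h(C)=\prod_i|1-\omega_i|$ and expand $\log|1-\omega_i/\sqrt q\cdot\sqrt q|$ via $\log(1-x)$. Express the power sums $\sum_i\omega_i^k$ through $N_k$, and hence through the $B_m$. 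Then truncate the resulting series at some level $K$, controlling the tail and the remaining oscillatory sums $\sum_i \omega_i^k q^{-k}$ by the Weil bound. A positivity argument in the style of the Drinfeld–Vladut or Oesterl\'e explicit formula (a nonnegative test trigonometric polynomial) should give the contribution of each $B_m$, $m\le M$, with a strictly positive coefficient close to $\log_q\frac1{1-q^{-m}}$. The other terms are either of favorable sign or $O(q^{K}+g q^{-K/2})$, and choosing $K=K(g)\to\infty$ slowly makes them $o(g)$.

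Granting this inequality, fix $\varepsilon>0$ and let $S_g$ be the set of curves with $\sum_{m\le M}B_m(C)\ge\varepsilon g$. Each such curve has $h(C)\ge q^{g+\delta_M\varepsilon g-o(g)}$. By Markov's inequality and $h_g\sim c(q)q^g$, the proportion of $S_g$ in $\M_g(\F_q)$ is therefore at most $c(q)\,q^{-\delta_M\varepsilon g+o(g)}\to0$. On the other hand, the Weil bound gives $B_m(C)/g\le (q^m+1+2gq^{m/2})/(mg)$, which is bounded uniformly in $g$. Splitting the average of $B_m/g$ over $S_g$ and its complement then shows its $\limsup$ is at most a constant times $\varepsilon$, for every $\varepsilon>0$. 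This proves the claim, and with it Theorem \ref{zeta zero theorem}.

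The main obstacle is the uniform inequality in the second paragraph. Theorem \ref{brauer-siegel} is only a limit statement along asymptotically exact families, and the proof needs an error term $o(g)$ uniform over all of $\M_g(\F_q)$. If a direct uniform bound proves awkward, the fallback is a compactness argument: suppose that along a subsequence of $g$ some positive proportion of curves has $B_m/g\ge\varepsilon$. Extract asymptotically exact subfamilies by a diagonal argument, which is possible since each $B_m/g$ is bounded, and apply Theorem \ref{brauer-siegel} to them. This would contradict the average class number bound, provided one checks that the lower-semicontinuity in Tsfasman–Vladut's argument holds uniformly enough to pass from limits along families to the needed exponential bound.
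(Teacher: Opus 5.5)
Your proposal is correct, and it takes a genuinely different, leaner route than the paper. The core input is the same in both: the truncated expansion of $\log h(C)$ (Tsfasman's Lemma 2), combined with a Markov-type use of $h_g\le q^{g+o(g)}$.

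\textbf{The paper's route.} It splits $\M_g(\F_q)$ by class-number rank and shows that the bottom $1-\varepsilon$ fraction $S_{g,\varepsilon}$ satisfies $\log_q h(C)\le g(1+o(1))$. It then uses the two-sided version of the expansion to get $B_m(C)/g\to0$ uniformly on $S_{g,\varepsilon}$. These curves are strung into an asymptotically exact family, and Tsfasman--Vladut (Theorem~\ref{explicit-formula}) is applied to it. The top $\varepsilon$ fraction is discarded by its mass.

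\textbf{Your route.} You use that Fourier coefficients are linear in $\mu_C$ and are given exactly by $\sum_i\omega_i^n=q^n+1-N_n(C)$. So only the average of $B_m/g$ matters. You split by the size of $\sum_{m\le M}B_m$ rather than by $h$, and Markov plus the Weil bound finish the argument.

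\textbf{What each buys.} Your route avoids Theorem~\ref{explicit-formula} and the family construction entirely, and it needs only a one-sided inequality. It also gives more: for fixed $\varepsilon,M$ the exceptional set has exponentially small weighted proportion. Hence $\mu_C$ tends to the uniform measure in probability, not just on average. The paper's route instead places the result directly in the Tsfasman--Vladut picture, with typical curves behaving like a family with all $\phi_m=0$, at the cost of invoking that theorem.

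\textbf{The step you flag as the main obstacle is routine.} No explicit-formula positivity argument or compactness fallback is needed. Since $|\omega_i|>1$, you must first write $1-\omega_i=-\omega_i(1-\omega_i^{-1})$. Then use $\prod_i\omega_i=q^g$ and $\sum_i\omega_i^{-j}=q^{-j}(q^j+1-N_j)$. This gives, for every $K$,
\[ \log h(C)-g\log q=\sum_{m\le K}B_m(C)\sum_{k\le K/m}\frac{q^{-km}}{k}-\sum_{j\le K}\frac{1+q^{-j}}{j}-\sum_{j>K}\frac1j\sum_i\omega_i^{-j}. \]
In this identity:
\begin{itemize}
\item each $B_m$ with $m\le K$ has coefficient at least $q^{-m}$, so positivity is automatic;
\item the middle sum is $O(\log K)$;
\item the last sum is $O(gq^{-K/2}/K)$ by the Weil bound.
\end{itemize}
All of these bounds hold independently of $C$. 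Drop the terms with $m>M$ and take $K=g$. This yields $\log_q h(C)\ge g+\frac{q^{-M}}{\log q}\sum_{m\le M}B_m(C)-O(\log g)$. That is your inequality with $\epsilon_M(g)=O(\log g)$; no $O(q^K)$ term arises. It is precisely the computation in the paper's proof, whose error terms are likewise uniform in $C$.
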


\begin{proof}
    Write for the average class number of curves in $\M_g(\F_q)$
    $$h_g = \frac{1}{\#\M_g(\F_q)}\sum_{C \in \M_g(\F_q)}h(C)$$
    Fix some $\varepsilon > 0$, and decompose $\M_g(\F_q)$ into two sets $S_{g,\varepsilon}$ and $T_{g,\varepsilon}$ such that:
    \begin{itemize}
        \item $|\frac{\#T_{g,\varepsilon}}{\#\M_g(\F_q)} - \varepsilon| < \frac{1}{\#\M_g(\F_q)}$
        \item No curve in $S_{g,\varepsilon}$ has greater class number than any curve in $T_{g,\varepsilon}$.
    \end{itemize}
    In other words, $S_{g,\varepsilon}$ contains the bottom $1-\varepsilon$ proportion of the curves in $\M_g(\F_q)$ when sorted by class number, and $T_{g,\varepsilon}$ contains the remainder. Then we can decompose $\mu_g$ as $\mu_g = \mu_g^{S,\varepsilon} + \mu_g^{T,\varepsilon}$, where
    $$\mu_g^{S,\varepsilon} = \frac{1}{\#\M_g(\F_q)}\sum_{C \in S_{g,\varepsilon}}\mu_C$$
    and $\mu_g^{T,\varepsilon}$ is defined analogously.

    Suppose that for some $\delta>0$ there were an infinite increasing sequence $\{g_i\}$ so that for all $i$,
    $$\sup_{C \in S_{g_i,\varepsilon}} h(C) \geq q^{g_i(1 + \delta)}$$
    Then for all such $i$ we would have
    \begin{align*}
        h_{g_i} &\geq \frac{1}{\#\M_{g_i}(\F_q)}\sum_{C \in T_{g_i,\varepsilon}}h(C) \geq \varepsilon \cdot q^{g_i(1+\delta)}
    \end{align*}
    Thus, if the conclusion of Theorem \ref{main class number theorem} holds, then it must be the case that there is no such $\delta$, and so in particular we have
    \begin{align}\label{S-bound}
        \limsup_{g\to\infty}\sup_{C \in S_{g,\varepsilon}} \log h(C) \leq g\log q
    \end{align}
    
    Define a family of curves $\mathcal C = \{C_i\}$ by first inserting the curves of $S_{1,\varepsilon}$ into the sequence in any order, followed by the curves of $S_{2,\varepsilon}$ in any order, and so on. We will show that, under our heuristics, $\mathcal C$ is asymptotically exact with each $\phi_m(\mathcal C) = 0$.

    Let $C$ be an $\F_q$-curve. Using that $h(C) = L(C,1)$, it is an elementary computation that for each $N$ we have
    $$\frac{\log h(C)}{g(C)} - \log q = \frac1{g(C)}\sum_{j=1}^N \frac{q^{-j}}jN_j(C) - \frac1{g(C)}\sum_{j=1}^N\frac{1+q^{-j}}j - \frac1{g(C)}\sum_{j=N+1}^\infty\frac{q^{-j/2}}j\Omega(C,j),$$
    where $N_j(C)$ is the number of $\F_{q^j}$-points of $C$, and $\Omega(C,j)$ is the sum of the $j$th powers of the zeta zeros of $C$ (this is Lemma 2 of \cite{tsfasman}). Let $N = N(g)$ be a function of $g$ increasing to infinity slowly enough that $N(g)/\log g \to 0$. Then the rightmost term of the above goes to 0 as $g \to \infty$ (so then $N \to \infty)$; moreover, we have
    $$\frac1{g(C)}\sum_{j=1}^{N}\frac{1+q^{-j}}j \leq \frac{2}{g}(1 + \log N) \to 0$$
    Thus, we have
    \begin{align*}
        \lim_{i\to\infty}\Big(\frac{\log h(C_i)}{g(C_i)} - \log q\Big) &= \lim_{i\to\infty}\frac1{g(C_i)}\sum_{j=1}^{N(g(C_i))} \frac{q^{-j}}jN_j(C_i) \\
        &= \lim_{i\to\infty} \frac1{g(C_i)}\sum_{j=1}^{N}\frac{q^{-j}}j\sum_{m \mid j}m\cdot B_m(C_i) \\
        &= \lim_{i\to\infty}\sum_{m=1}^{N}B_m(C_i)\sum_{j=1}^{N/m}\frac{q^{-jm}}{j} \\
        &= \lim_{i\to\infty}\Big(\sum_{m=1}^N \frac{B_m(C_i)}{g(C_i)} \log\frac{1}{1-q^{-m}} - \frac1{g(C_i)}\sum_{m=1}^N B_m\sum_{i=(N/m)+1}^\infty\frac{q^{-im}}i\Big)
    \end{align*}
    Let us deal with the second term of the last expression. It can be bounded as
    \begin{align*}
        \frac1{g(C_i)}\sum_{m=1}^N B_m\sum_{i=(N/m)+1}^\infty\frac{q^{-im}}i &\leq \frac1{g(C_i)}\sum_{m=1}^N\frac{N_m(C_i)}m\cdot\frac{q^{-N-m+1}}{(\frac Nm)(1-q^{-m})} \\
        &\leq \frac{1}{g(C_i)}\sum_{m=1}^N(q^m+1+2g(C_i)q^{m/2})\frac{q^{-N-m+1}}{N(1-q^{-m})}
    \end{align*}
    where the last inequality uses the Weil conjectures. This last expression goes to 0 as $i\to\infty$ (since then $g(C_i)$ and $N$ go to infinity), so we have
    $$\lim_{i\to\infty}\Big(\frac{\log h(C_i)}{g(C_i)} - \log q\Big) = \lim_{i\to\infty}\sum_{m=1}^N \frac{B_m(C_i)}{g(C_i)} \log\frac{1}{1-q^{-m}}.$$
    On the one hand, under Theorem \ref{main class number theorem}, by (\ref{S-bound}) the left hand side is bounded above by 0. On the other hand, the right hand side is non-negative. Thus, both limits exist and are 0. In particular, the limits
    $$\lim_{i\to\infty}\frac{B_m(C_i)}{g(C_i)}$$
    all exist and are 0. Thus, $\mathcal C$ is asymptotically exact, and it follows from Theorem \ref{explicit-formula} that the $\mu_{C_i}$ converge to the uniform measure.

    Let $\mathcal L$ denote the uniform probability measure on the circle; then it follows from the above that the $\mu_g^{S,\varepsilon}$ converge to $(1-\varepsilon)\mathcal L$. Since $\mu_g^{S,\varepsilon}$ and $\mu_g^{T,\varepsilon}$ are both positive measures with $\mu_g^{T,\varepsilon}$ having total mass approaching $2\pi\varepsilon$ as $g\to\infty$, the decomposition $\mu_g = \mu_g^{S,\varepsilon} + \mu_g^{T,\varepsilon}$ for each $\varepsilon$ implies that $\mu_g$ converges to $\mathcal L$ as desired.
\end{proof}

To conclude, we discuss a possible interpretation of the constant $c(q)$ appearing in Theorem \ref{main class number theorem}. The constant
$$c(q) = \frac{1}{1-q^{-1}}\cdot\prod_{k=2}^\infty\Big(\frac{1}{1-q^{-k}}\Big)^{k+1}$$
bears some superficial similarity to the sum on the right hand side of Theorem \ref{brauer-siegel}, which contributes a multiplicative term of
$$\prod_{k=1}^\infty \Big(\frac1{1-q^{-k}}\Big)^{\phi_k\cdot g}$$
to the asymptotic growth rate of $h(C_i)$ for $\{C_i\}$ an asymptotically exact family. Theorem \ref{main class number theorem} predicts that curves in $\M_g(\F_q)$ behave on average like a family with all $\phi_m = 0$, and so naturally the only exponential term in $g$ is the $q^g$ factor; however, it could be interesting to interpret the exponents appearing in $c(q)$ in terms of the relative quantities of degree $k$ prime divisors on an average curve.

\printbibliography

\end{document}